\theoremstyle{plain}
\newtheorem{thm}{Theorem}[section]
\newtheorem{cor}[thm]{Corollary}
\newtheorem{prop}[thm]{Proposition}
\newtheorem{lem}[thm]{Lemma}
\newcounter{theoremintro}
\newtheorem{theoremi}[theoremintro]{Theorem}
\theoremstyle{definition}
\newtheorem{defn}[thm]{Definition}
\newtheorem{eg}[thm]{Example}
\newcommand{\bC}{{\mathbb{C}}}
\newcommand{\bD}{{\mathbb{D}}}
\newcommand{\bN}{{\mathbb{N}}}
\newcommand{\bZ}{{\mathbb{Z}}}
  \newcommand{\A}{{\mathcal{A}}}
\renewcommand{\H}{{\mathcal{H}}}
\renewcommand{\L}{{\mathcal{L}}}
  \newcommand{\M}{{\mathcal{M}}}
  \newcommand{\N}{{\mathcal{N}}}
  \newcommand{\X}{{\mathcal{X}}}
\renewcommand{\phi}{\varphi}
\newcommand{\upchi}{{\raise.35ex\hbox{$\chi$}}}
\newcommand{\ran}{\operatorname{ran}}
\begin{document}

\title[The invariant subspace problem and Rosenblum operators]{The invariant subspace problem and Rosenblum operators I}

\author{Junsheng Fang}
\address{Junsheng Fang, School of Mathematics, Hebei Normal University, 050016, Shijiazhuang, P. R. China}
\email{jfang@hebtu.edu.cn}

\author{Bingzhe Hou}
\address{Bingzhe Hou, School of Mathematics, Jilin University, 130012, Changchun, P. R. China}
\email{houbz@jlu.edu.cn}
	
\author{Chunlan Jiang}
\address{Chunlan Jiang, School of Mathematics, Hebei Normal University, 050016, Shijiazhuang, P. R. China}
\email{cljiang@hebtu.edu.cn}

\author{Yuanhang Zhang}
\address{Yuanhang Zhang, School of Mathematics, Jilin University, 130012, Changchun, P. R. China}
\email{zhangyuanhang@jlu.edu.cn}

\thanks{Junsheng Fang is supported in part by National Natural Science Foundation of China (No.: 12071109); Chunlan Jiang and Bingzhe Hou are supported in part by National Natural Science Foundation of China (No.:12471120); Chunlan Jiang and Junsheng
Fang are partly supported by the Hebei Natural Science Foundation (No.: A2023205045); Yuanhang Zhang is supported in part by National Natural Science Foundation of China (No.: 12471123).
This article was also partly supported by Tianyuan Mathematics Research Center.}
	
\date{}
\subjclass[2010]{Primary 47A15, 47B02; Secondary 47A16.}
\keywords{Invariant Subspace Problem, Rosenblum operators, Gelfand-Hille Theorem, Hankel operators, Cyclic vectors.}
\thanks{}
\begin{abstract}
Let $T\in B(\H)$ be an invertible operator. From the 1940's, Gelfand, Hille and Wermer investigated the invariant subspaces of $T$
by analyzing the growth of $\|T^n\|$, where $n\in \bZ$. In this paper, we study the invariant subspaces of $T$ by estimating the
growth of $\|T^n+\lambda T^{-n}\|$, where $n\in \bN$ and $\lambda$ is a nonzero complex constant. The key ingredient of our approach is
introducing the notion of shift representation operators,
which is based on the Rosenblum operators. In addition, by employing shift representation operators, we provide an equivalent
of the Invariant Subspace Problem via the injectivity of certain Hankel operators.
\end{abstract}

\date{\today}

\maketitle

Let $\H$ denote the unique (up to isomorphism)
separable, infinite-dimensional complex Hilbert space.
A {\it subspace} of $\H$ is a linear manifold which is closed in the norm topology.
If $\L\subset \H$, then the {\it span} of $\L$, denoted by $\bigvee\L$
is the intersection of all subspaces containing $\L$.
We say that the subspace $\M$
is invariant under the operator $T$
if $T\M\subset \M$. Denote by $B(\H)$ the set of all
bounded linear operators acting on $\H$. For $T\in B(\H)$, the collection of all subspaces of $\H$ invariant under $T$ is denoted $\textup{Lat}T$; $T$ is called {\it intransitive} if it leaves invariant
subspaces other than $\{0\}$ or $\H$; otherwise, it is called
{\it transitive} which means that $\textup{Lat}T=\{\{0\},\H\}$.
Denote by $K(\H)$ the closed ideal of $B(\H)$ of all compact operators. For $T\in B(\H)$, denote the kernel of
$T$ and the range of $T$ by $\ker T$ and $\ran T$ respectively; and denote by $\sigma(T),
\sigma_p(T), r(T)$ the spectrum, the point
spectrum, the spectral radius of $T$ respectively. Similarly, the above notations are also used in the Banach space setting.

Given vectors $f$ and $g\in \H$, we define the rank one operator $f\otimes g$
mapping $\H$ into itself by $(f\otimes g)h=\langle h, g\rangle f$, $\forall h\in \H$.
To each $T\in B(\H)$, we could naturally associate a weakly closed subalgebra with identity of $B(\H)$,
\[\{T\}'=\{A\in B(\H):TA=AT\}=\textup{the}~\it{commutant}~\textup{of}~T.\]

The \textbf{Invariant Subspace Problem} is one of best-known unsolved problems in Functional Analysis:

{\it Does every bounded operator on a separable, infinite dimensional, complex Hilbert space have a non-trivial invariant subspace?}

In the 1930's von Neumann found a very beautiful technique for producing invariant subspaces for
compact operators by approximating by certain invariant
subspaces of a sequence of finite-rank operators. This result
was never published. In 1954, Aronszajn and Smith extended the result to separable Banach spaces \cite{AS}.
Furthermore, Lomonosov \cite{Lom} greatly increased the class of operators with proper invariant subspaces by showing that every operator which commutes with a compact operator has a proper invariant subspace.

In 1975, Enflo presented the first construction of an operator on a Banach space having no invariant subspace. The paper has existed in manuscript form for about 12 years prior to its publication \cite{En,Bea}.  In the 1980's, Read produced more examples of operators which have no proper invariant subspace on some non-reflexive Banach spaces \cite{R1,R2,R3,R4}. Dropping the boundedness assumption,  bijective linear operators having no invariant subspace were constructed on any separable infinite-dimensional Banach space \cite{Shi70,Had75}.

For an overview of the Invariant Subspace Problem, we refer to the classical monograph by Radjavi and Rosenthal \cite{RR2} and a more recent book by Chalendar and Partington \cite{CP} as well as Brown's remarkable papers \cite{Brown78,Brown87}.


A very important special case for which the Invariant Subspace Problem is still open is that of quasinilpotent operator on $\H$.  An operator $A$ in $B(\H)$ is called quasinilpotent if the spectrum of $A$, $\sigma(A)=\{0\}$. The reason why we say the quasinilpotent operators are much more important is the following result: if every operator in norm closure of quasinilpotent operators has a proper invariant subspace, then the answer is``Yes" for the Invariant Subspace Problem. If the readers want to consult more detailed results, one might refer to the following list: Apostol and Voiculescu \cite{ApoVoi74}, Herrero \cite{Her78}, Foias and Pearcy \cite{FoiPea74}; Foias, Jung, Ko, and Pearcy \cite{FJKP1,FJKP2}, Tcaciuc \cite{Tca19}.

Let $\X$ be a complex Banach space, $A$ be a bounded linear operator whose spectrum $\sigma(A)$ is
the singleton $\{1\}$, and $r$ be a positive integer. The Gelfand-Hille theorem asserts that $(A-I)^r=0$ if $\|A^n\|=O(|n|^{r-1})$ as $|n|\to \infty$ \cite{Gel41,Hil44}.
In 1952, Wermer \cite{Wer52} used this theorem to prove
that an invertible operator $T\in B(\X)$ has a nontrivial invariant subspace if $\|T^n\|=O(|n|^r)$
as $|n|\to \infty$ for some positive integer $r$.
So far, the Gelfand-Hille theorem has been generalized to some different versions, we refer
to the survey of Zem\'{a}nek \cite{Zem94}, \cite{DriZem00}, \cite{FHJ24} and the references therein.

Fix $A,B\in B(\H)$, recall that the Rosenblum operator: $\tau_{A,B}(X)=AX-XB$, $X\in B(\H)$.
 The operator $\tau_{A,B}$ was first systematically studied by Rosenblum \cite{Ros}. Later, many scholars participated in the research and got abundant results. For more details, please refer to the following literatures \cite{DavRos74,Fia78,Fia79,BhaRos97}.

 The unilateral shift operator $S$ acting on classical Hardy space $H^2$ has fruitful nice properties, for example, the celebrated Beurling Theorem \cite{Beu48}.
In this paper, we shall build a relation between $T$ and $S$ by considering the Rosenblum operator $\tau_{T,S}$.
 More precisely, we show that $x\mapsto K_x=\sum_{n=0}^\infty T^nx\otimes e_n$ is an one to one map from $H^2$ onto
$\ker \tau_{T,S}$, where $T\in B(H^2)$ with $r(T)<1$, and $\{e_n\}_{n=0}^\infty$ is an orthogonal normal basis of $H^2$.
We name such operator $K_x$ as a \textbf{shift representation operator}. The discovery of $K_x$ reveals some hidden analytic structure and geometric structure of $T$. After a finer and detailed analysis of these structures,  we could discuss the existence of nontrivial invariant subspaces of an invertible operator $T\in B(\H)$ by estimating the
growth of $\|T^n+\lambda T^{-n}\|$, where $n\in \bN$, $\lambda$ is a nonzero constant. By contrary with Gelfand, Hille, Wermer, the growth condition appears to be weakened in an asymmetric way. In a precise sense, we show that

%

\begin{theoremi}\label{A}
Let $T\in B(\H)$. If there exists an invertible operator $A\in \{T\}'\setminus \mathbb{C}I$, $r\in \bN$ and $\lambda\neq 0$
such that
\[\|A^n+\lambda A^{-n}\|\sim O(n^r),~~\forall n\in \bN,\]
then $T$ is intransitive.
\end{theoremi}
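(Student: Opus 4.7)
The plan is to combine classical spectral techniques with the shift representation operator framework developed earlier in the paper. Since $T\in\{A\}'$, every element of the bicommutant $\{A\}''$ commutes with $T$, so the range or kernel of any nontrivial $B\in\{A\}''$ provides a proper $T$-invariant subspace; it therefore suffices to exhibit such an object, and the problem reduces to the structural analysis of $A$.

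First I would localize the spectrum of $A$ to the unit circle. For any $\mu\in\sigma(A)$, the spectral mapping theorem gives $|\mu^n+\lambda\mu^{-n}|\leq\|A^n+\lambda A^{-n}\|=O(n^r)$. If $|\mu|>1$, then $|\mu^n|$ grows exponentially and dominates $|\lambda\mu^{-n}|$, contradicting polynomial growth; symmetrically $|\mu|<1$ is excluded. Hence $\sigma(A)\subset\bT$. With this in hand I would split into three subcases. If $\sigma(A)$ is disconnected, the Riesz idempotent attached to a proper clopen subset of $\sigma(A)$ lies in $\{A\}''$ and is a nontrivial projection, whose range is the required $T$-invariant subspace. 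If $\sigma(A)=\{\mu_0\}$ is a singleton in $\bT$, rescaling by $\mu_0^{-1}$ reduces to $\sigma(A)=\{1\}$; combining identities such as $A^n(A^n+\lambda A^{-n})=A^{2n}+\lambda I$ with the binomial expansions of $(I+(A-I))^{\pm n}$ and the quasinilpotence of $A-I$, I expect to derive a Gelfand--Hille-type conclusion $(A-I)^m=0$ for some $m$, so that $\ker(A-I)$ is a proper nonzero $T$-invariant subspace.

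The remaining case, in which $\sigma(A)$ is a nontrivial connected arc in $\bT$ (possibly all of $\bT$), is where I expect the main obstacle and where the shift representation operator machinery becomes indispensable. The asymmetric bound $\|A^n+\lambda A^{-n}\|=O(n^r)$ allows $\|A^n\|$ and $\|A^{-n}\|$ to grow individually faster than any polynomial, so the classical Wermer argument does not apply. Instead, I would construct an element $K_x\in\ker\tau_{A,S}$ (a shift representation operator suitably adapted to $r(A)=1$, for instance by rescaling or by passing to a cyclic subspace on which the spectral radius is controlled), and exhibit $x\in\H$ for which $K_x\neq 0$ and $\ol{\ran K_x}\neq\H$. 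The intertwining $AK_x=K_xS$ combined with Beurling's theorem then controls the structure of $\ol{\ran K_x}$, and the asymmetric growth hypothesis should force this range to be proper. Adapting the shift representation to the setting $r(A)=1$ while preserving the intertwining identity, and then extracting properness from the asymmetric bound, is the most delicate step.
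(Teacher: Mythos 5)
Your reduction to finding a hyperinvariant subspace of $A$, and your localization $\sigma(A)\subset\bT$ via the spectral mapping theorem, are both correct, and the disconnected-spectrum case is handled properly by the Riesz idempotent. But the argument has a genuine gap exactly where you say the main difficulty lies: the case where $\sigma(A)$ is connected and not a singleton is never actually proved. Your plan there is to build a shift representation operator $K_x\in\ker\tau_{A,S}$ ``suitably adapted to $r(A)=1$'' and to hope that ``the asymmetric growth hypothesis should force'' $\ol{\ran K_x}$ to be proper. The construction of $K_x=\sum_n \frac{A^nx}{\beta_n}\otimes e_n$ requires $\{\|A^nx\|/\beta_n\}\in l^2$, i.e.\ polynomial-type control of $\|A^nx\|$ itself --- which, as you yourself note, the hypothesis does not give: only the combination $\|A^n+\lambda A^{-n}\|$ is controlled. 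No rescaling or restriction to a cyclic subspace fixes this, and Beurling's theorem plays no role at this point. The same vagueness affects your singleton case: the identity $A^n(A^n+\lambda A^{-n})=A^{2n}+\lambda I$ gives only $\|A^{2n}+\lambda I\|\le\|A^n\|\cdot O(n^r)$ with $\|A^n\|$ uncontrolled, so the Gelfand--Hille conclusion does not follow as sketched.

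The paper's proof avoids the case analysis entirely and resolves precisely the obstruction you identified, by a different use of the shift representation idea: one forms $\tilde K_x=\sum_n\frac{(A^{-(n+1)}+\lambda A^{n+1})x}{\beta_n}\otimes e_n$ with $\beta_n=(n+1)^{r+2}$ --- this \emph{is} compact because the hypothesis bounds exactly these combinations --- and observes that $\tau_{A^{-1},S_\beta}(\tilde K_x)$ is compact, hence kills the weakly null sequence $\{e_n\}$ in norm. The telescoping computation then yields $\|A^n(I-A^2)x\|\le M|n|^{r+2}$ for all $n\in\bZ\setminus\{0\}$ (Corollary \ref{crucial corollary}), and likewise for $A^*$ and $y$. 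Setting $x_0=(I-A^2)x$ and $y_0=(I-(A^*)^2)y$ (the degenerate cases $x_0=0$ or $y_0=0$ giving eigenvectors of $A^2$, hence hyperinvariant subspaces), one concludes by Atzmon's theorem (Theorem \ref{Atzmon}), which needs only two-sided polynomial growth of the orbits of two fixed vectors. The two missing ingredients in your proposal are therefore: applying the compactness trick to the \emph{hypothesis-controlled} sequence $(A^{-(n+1)}+\lambda A^{n+1})x$ rather than to $A^nx$, and replacing Beurling by Atzmon's hyperinvariant subspace theorem as the closing step.
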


\begin{theoremi}\label{B}
Let $T\in B(\H)$. Suppose that $K\in K(\H)$, and $r\in \bN$, $\lambda\neq 0$, such that $\sigma(T+K)=\{0\}$,
and
\[\|(T+K+I)^n+\lambda (T+K+I)^{-n}\|\sim O(n^r),~~\forall n\in \bN.\]
Then $(T+K)$ is a nilpotent operator of order $r+4$. Hence $T$ is intransitive.
\end{theoremi}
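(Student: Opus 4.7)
The plan is to first show that $N := T + K$ is nilpotent of order at most $r+4$, and then to deduce the intransitivity of $T$ by a Lomonosov-type compactness argument. Set $B := I + N$, so that $B$ is invertible with $\sigma(B) = \{1\}$. Since $N$ is quasinilpotent, $\|N^k\|^{1/k} \to 0$, and the Neumann series
\[B^{-n} = \sum_{k=0}^{\infty} (-1)^k \binom{n+k-1}{k} N^k\]
converges in operator norm. Combining this with the binomial expansion of $B^n$ gives
\[B^n + \lambda B^{-n} = \sum_{k \geq 0} q_k(n)\, N^k, \qquad q_k(n) := \binom{n}{k} + \lambda (-1)^k \binom{n+k-1}{k},\]
in which each $q_k$ is a polynomial in $n$ of degree at most $k$ with leading term $\frac{1 + (-1)^k \lambda}{k!}\, n^k$; this leading term vanishes only when $\lambda = (-1)^{k+1}$.

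Next I would use the growth $\|B^n + \lambda B^{-n}\| = O(n^r)$ together with the Vandermonde-type structure of $\{q_k\}$ to isolate each $N^k$. For a fixed cutoff $M$, one chooses distinct positive integers $n_1, \dots, n_M$ and scalars $c_{j,k}$ so that the finite combination $\sum_j c_{j,k} (B^{n_j} + \lambda B^{-n_j})$ equals $N^k$ plus a tail whose norm is controlled via the rapid decay of $\|N^\ell\|$ for large $\ell$. Tracking the polynomial dependence of the $c_{j,k}$ on the nodes then yields, in a suitable asymptotic regime, an estimate of the form $\|N^k\| = O(n^{r-k})$, which forces $N^k = 0$ once $k$ exceeds $r$ by the requisite margin. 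The extra slack in $r+4$ is needed to handle the exceptional values $\lambda = \pm 1$, where the leading coefficient of $q_k$ vanishes on alternating parities of $k$; splitting the sum into even- and odd-indexed parts and rerunning the isolation on each takes care of these cases uniformly.

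Once $N^{r+4} = (T+K)^{r+4} = 0$, intransitivity of $T$ follows from a simple expansion: every monomial in $T$ and $K$ of length $r+4$ other than $T^{r+4}$ contains at least one factor of $K$ and is therefore compact, whence $T^{r+4}$ itself is compact. Since $T$ commutes with $T^{r+4}$, Lomonosov's theorem yields a nontrivial invariant subspace for $T$ when $T^{r+4} \neq 0$ (in that case $T^{r+4}$ is a nonzero, and hence non-scalar, compact operator), while $T^{r+4} = 0$ reduces to the direct observation that $\ker T \neq \{0\}$. The main obstacle I foresee lies in the isolation step: the classical Gelfand-Hille contour integral exploits two-sided growth in $|n|$, whereas here we have only one-sided $\bN$-growth of the mixed combination $B^n + \lambda B^{-n}$. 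Overcoming this will likely require either a careful choice of interpolation nodes tailored to $\{q_k\}$ or, more in the spirit of the paper, the shift-representation operators $K_x$ introduced earlier, through which the norm bounds become analytic data on the Hardy space where the local behavior of $B$ near the spectral point $1$ can be read off directly.
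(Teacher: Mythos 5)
Your reduction to showing $(T+K)^{r+4}=0$ and your concluding step (every word of length $r+4$ in $T,K$ other than $T^{r+4}$ contains a factor of $K$, so $T^{r+4}$ is compact, and then Lomonosov, or triviality when $T^{r+4}=0$) are fine and essentially match the paper, which phrases the same endgame as ``$T$ is polynomially compact'' and cites \cite[Corollary 5.6]{RR2}. The problem is the middle: your proof of nilpotency is a sketch whose crucial step is exactly the one you flag as an obstacle and do not resolve. The Vandermonde/interpolation scheme for isolating $N^k$ from the single one-sided family $B^n+\lambda B^{-n}$ is not carried out, and it faces real difficulties beyond the exceptional values $\lambda=\pm1$: the tail $\sum_{\ell>M}\bigl(\sum_j c_{j,k}q_\ell(n_j)\bigr)N^\ell$ involves coefficients $q_\ell(n_j)\sim n_j^{\ell}/\ell!$ that grow with the nodes while the only decay available is $\|N^\ell\|^{1/\ell}\to 0$ with no rate, and your claimed intermediate estimate ``$\|N^k\|=O(n^{r-k})$'' is not a well-formed statement ($\|N^k\|$ does not depend on $n$). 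As written, the argument does not establish $(T+K)^{r+4}=0$.

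The missing idea is the paper's Lemma \ref{L:crucial lemma}, which converts the asymmetric one-sided hypothesis into a symmetric two-sided one, after which the classical machinery applies. Concretely: with $A=T+K+I$ and $\beta_n=(n+1)^{r+2}$, the growth hypothesis makes $\{\|A^{-(n+1)}+\lambda A^{n+1}\|/\beta_n\}\in l^2$, so $\tilde K_x=\sum_n \beta_n^{-1}(A^{-(n+1)}+\lambda A^{n+1})x\otimes e_n$ is compact; then $A^{-1}\tilde K_x-\tilde K_x S_\beta$ is compact, and applying it to the weakly null sequence $e_n$ yields $\|A^n(I-A^2)x\|/\beta_n\to 0$ for every $x$, whence by the Uniform Boundedness Principle $\|A^n(I-A^2)\|\leq M|n|^{r+2}$ for \emph{all} $n\in\mathbb{Z}\setminus\{0\}$ (the negative side by symmetry in $A\leftrightarrow A^{-1}$). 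Since $\sigma(A+I)=\{2\}$, the factor $A+I$ can be inverted to give $\|A^{n+1}-A^n\|=O(|n|^{r+2})$ on all of $\mathbb{Z}$, and Zem\'anek's form of the Gelfand--Hille theorem then gives $(A-I)^{r+4}=0$. Your instinct that the shift-representation operators are the way past the one-sided obstacle is correct, but the proposal stops exactly where that device is needed, so the proof as submitted has a genuine gap.
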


In addition, by employing the shift representation operators, we provide an equivalent
of the Invariant Subspace Problem via the injectivity of certain Hankel operators.

\begin{theoremi}\label{C}
Let $A\in B(H^2)$ with $r(A)<1$. Then $A$ is intransitive if and only if there are two nonzero vectors $f,g\in H^2$, such that
the Hankel operator $H_\psi$ is not injective, where $\psi=\sum_{n=0}^{\infty}\langle A^n f,g\rangle z^{-n}$.
\end{theoremi}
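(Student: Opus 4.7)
The plan is to use the shift representation operators as a bridge between the Hankel operator $H_\psi$ and the invariant subspace structure of $A$. Write $K_f^{(A)} = \sum_{n\geq 0} A^n f \otimes e_n$ and $K_g^{(A^*)} = \sum_{n\geq 0} (A^*)^n g \otimes e_n$; both are well defined since $r(A) = r(A^*) < 1$, by the result recalled in the introduction. The forward direction is essentially free: if $A$ is intransitive with nontrivial invariant subspace $\M$, take $f \in \M \setminus \{0\}$ and $g \in \M^\perp \setminus \{0\}$. Then $\langle A^n f, g\rangle = 0$ for all $n \geq 0$, so $\psi \equiv 0$ and $H_\psi$ is the zero operator on the infinite-dimensional space $H^2$, which is not injective.

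For the main direction, I would first establish the factorization
\[
H_\psi \;=\; \bigl(K_g^{(A^*)}\bigr)^{\!*} \, A \, K_f^{(A)}
\]
(in the convention $(H_\psi)_{m,n} = \langle A^{m+n+1} f, g\rangle$; under the alternative convention $(H_\psi)_{m,n} = \langle A^{m+n} f, g\rangle$ the middle $A$ is omitted). This follows by a direct matrix coefficient computation: the $(m,n)$ entry on the right equals $\langle A K_f^{(A)} e_n, (A^*)^m g\rangle = \langle A^{m+n+1} f, g\rangle$, as required.

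Now suppose $H_\psi h = 0$ for some $h \in H^2 \setminus \{0\}$ and set $v = A K_f^{(A)} h$. The factorization yields $\langle A^m v, g\rangle = 0$ for all $m \geq 0$. If $v \neq 0$, then $\overline{\mathrm{span}}\{A^m v : m \geq 0\}$ is a nonzero closed $A$-invariant subspace orthogonal to the nonzero vector $g$, hence proper, so $A$ is intransitive. If $v = 0$ but $K_f^{(A)} h \neq 0$, then $K_f^{(A)} h \in \ker A \setminus \{0\}$, so $\ker A$ is itself a nonzero $A$-invariant subspace; it is proper unless $A = 0$, a trivially intransitive case.

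The hard case, where the hypothesis $r(A) < 1$ is really needed, is $K_f^{(A)} h = 0$. Writing $\phi_h(z) = \sum_n h_n z^n \in H^2$, the Riesz functional calculus defines a bounded operator $\phi_h(A)$ and one verifies $K_f^{(A)} h = \phi_h(A) f$, using that $\sigma(A) \subset \{|z| \leq r(A)\} \subsetneq \bD$. If $\phi_h(A) \neq 0$, then $\ker \phi_h(A)$ is a proper nonzero closed $A$-invariant subspace (it contains $f$). If $\phi_h(A) = 0$, then the nonzero analytic function $\phi_h$ vanishes on $\sigma(A)$ by spectral mapping, forcing $\sigma(A)$ to be a finite subset of $\bD$; for $|\sigma(A)| \geq 2$, Riesz spectral projections supply nontrivial invariant subspaces, and for $\sigma(A) = \{\lambda_0\}$, factoring $\phi_h(z) = (z - \lambda_0)^k \phi_1(z)$ with $\phi_1(\lambda_0) \neq 0$ makes $\phi_1(A)$ invertible, so $(A - \lambda_0 I)^k = 0$, giving either $A = \lambda_0 I$ (every subspace invariant) or the nontrivial invariant subspace $\ker(A - \lambda_0 I)$. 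This spectral-analytic subcase is the main obstacle; the rest of the proof is essentially the cyclic-subspace tautology packaged through the factorization above.
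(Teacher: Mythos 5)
Your proof is correct, but the sufficiency direction runs along a genuinely different track from the paper's. The paper first conjugates by a unitary to reduce to $g=e_0$, invokes Beurling's theorem to write $\ker H_\psi=\vartheta H^2$ for an inner function $\vartheta$, converts $H_\psi\vartheta=0$ into $T_\vartheta^*\alpha=0$ with $\alpha=\sum_{n}\overline{\langle A^nf,e_0\rangle}\,e_n$, and then applies Proposition \ref{alpha}: by the bimodule property (Proposition \ref{bimodule}) the operator $K_fT_\vartheta$ lies in $\ker\tau_{A,S}$ and has vanishing first row, so its range is not dense and Proposition \ref{two conditions make A intransitive}(2) applies, the degenerate case $\ker K_f\neq\{0\}$ being absorbed by Proposition \ref{two conditions make A intransitive}(1), which again rests on Beurling. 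Your route instead rests on the factorization $H_\psi=\bigl(K_g^{(A^*)}\bigr)^{*}K_f^{(A)}$ — which is the correct form under the paper's convention $(H_\psi)_{m,n}=\langle A^{m+n}f,g\rangle$, as one reads off the displayed matrix in the paper's proof, and which you verify entrywise — so that any $0\neq h\in\ker H_\psi$ gives $\langle A^m(K_fh),g\rangle=0$ for all $m\ge0$; then either $\bigvee_{m\ge0}\{A^m(K_fh)\}$ is a nonzero invariant subspace orthogonal to $g$, or $K_fh=\phi_h(A)f=0$ and your elementary spectral-mapping/identity-theorem analysis (finite spectrum, Riesz projections, or nilpotency of $A-\lambda_0 I$) finishes. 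What your approach buys is a proof of Theorem \ref{C} that needs neither Beurling's theorem, nor the unitary normalization $g=e_0$, nor Propositions \ref{alpha} and \ref{two conditions make A intransitive}(1); the case $\phi_h(A)f=0$ is exactly the situation of Theorem \ref{main theorem 0}, and your treatment of it is arguably more self-contained than the paper's. What it gives up is the intermediate machinery the paper extracts as statements of independent interest (Proposition \ref{alpha}, Theorem \ref{main theorem 0}) and the explicit identification of the invariant subspace as $\overline{\ran K_fT_\vartheta}$.
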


Let $\X$ be a complex Banach space, $T\in B(\X)$ and $x\in \X$, then $\bigvee_{n=0}^{\infty}\{T^nx\}$
is invariant under $T$. The vector $x$ is a {\it cyclic} vector of $T$
if $\bigvee_{n=0}^{\infty}\{T^nx\}=\X$;
in this case, $T$ is called
a cyclic operator. It is evident that $T\in B(\X)$ is transitive if and only if every nonzero vector $x\in \X$ is a cyclic vector of $T$.
In this sense, the interest in studying cyclic operators stems from the
Invariant Subspace Problem.
In 1968, Sz-Nagy and Foias \cite{NF68} showed that the vector space spanned by the set of all cyclic vectors for a cyclic operator on a Hilbert space is dense in the whole space.
In 1972, Geher \cite{Geh72} gave an elementary proof of this fact for a cyclic operator on a Banach space. In 1995, Ansari \cite{Ans95} proved that for a cyclic operator $T$ on a Banach space the set of all cyclic vectors for $T$ is itself dense if $\sigma_p(T^*)$ contains no nonempty open set.

Finally, with the help of the new tool of shift representation operators, we have the following theorem, which is a surprise to the authors.

\begin{theoremi}\label{D}
Let $T\in B(H^2)$ with $\sigma(T)=\{0\}$ and $\sigma_p(T)=\emptyset$.
Suppose that $T^mx$ is a cyclic vector of $T$, $\forall m\in \bN$.
Then for any $0\neq\alpha=\{\alpha_k\}_{k=0}^{\infty}\in l^2$, $\sum_{k=0}^{\infty}\alpha_kT^kx$ is
a cyclic vector of $T$.
\end{theoremi}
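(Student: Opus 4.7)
My plan is to rewrite $y := \sum_{k=0}^\infty \alpha_k T^k x$ as the image of a nonzero element of $H^2$ under the shift representation operator, and then use Beurling's theorem together with the quasinilpotent functional calculus to identify the $T$-cyclic subspace generated by $y$ explicitly. First, since $T$ is quasinilpotent, $\|T^k\|^{1/k}\to 0$, so the series defining $y$ converges in norm and $y = K_x h$, where $h = \sum_k \alpha_k e_k \in H^2$ is nonzero. Using the intertwining $T K_x = K_x S$, we have $T^n y = K_x S^n h$, hence
\[
\bv_{n \geq 0} T^n y \;=\; \ol{K_x\Bigl(\bv_{n\geq 0} S^n h\Bigr)} \;=\; \ol{K_x(\phi H^2)},
\]
where $\phi$ is the inner factor of $h$ in its inner-outer factorization (Beurling's theorem).

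Next, write $\phi(z)=\sum_{k\geq 0}\phi_k z^k$. By quasinilpotency the series $\phi(T):=\sum_k \phi_k T^k$ converges in operator norm, defining a bounded operator. A double-sum computation, justified by the estimates $\|T^j\|\leq C\rho^j$ (for arbitrarily small $\rho>0$) together with $\sum_k |\phi_k|\rho^k<\infty$ and the analogous bound for the Taylor coefficients of $g\in H^2$, yields
\[
K_x(\phi g)\;=\;\phi(T)K_x g, \qquad g\in H^2,
\]
so $K_x(\phi H^2)=\phi(T)(K_x(H^2))$. Since $x$ is cyclic (e.g.\ from the case $m=1$ of the hypothesis, as cyclicity of $Tx$ implies cyclicity of $x$), $K_x(H^2)=\spn\{T^n x\}$ is dense in $H^2$, and continuity of $\phi(T)$ gives
\[
\bv_n T^n y \;=\; \ol{\phi(T)H^2}\;=\;\ol{\ran\phi(T)}.
\]

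Finally, factor $\phi(z)=z^m\psi(z)$ where $\psi$ is inner with $\psi(0)\neq 0$ and $m\in \bZ_{\geq 0}$ is the order of the zero of $\phi$ at the origin. Then $\phi(T)=T^m\psi(T)$, and the spectral mapping theorem gives $\sigma(\psi(T))=\{\psi(0)\}$, so $\psi(T)$ is invertible. Consequently $\ran\phi(T)=T^m(\psi(T)H^2)=\ran T^m$, and therefore
\[
\bv_n T^n y\;=\;\ol{\ran T^m}.
\]
The closed subspace $\ol{\ran T^m}$ is $T$-invariant (as $T\cdot\ran T^m=\ran T^{m+1}\subset \ran T^m$) and contains $T^m x$; by the cyclicity of $T^m x$, it must therefore equal $H^2$. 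This yields $\bv_n T^n y = H^2$, which is the desired cyclicity of $y$.

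The main technical obstacle is justifying the intertwining $K_x\phi(S)=\phi(T)K_x$ for the inner function $\phi$: on the right, the series defining $\phi(T)$ converges in norm thanks to quasinilpotency, but on the left $\phi(S)$ must be understood as the multiplication operator $M_\phi$ (the corresponding series is not norm-convergent). The identity is verified pointwise on $g\in H^2$ via a Fubini-type interchange of summation, exploiting the super-exponential decay of $\|T^j\|$. We remark that the hypothesis $\sigma_p(T)=\emptyset$ does not seem to play a role in this argument; the work is carried entirely by $\sigma(T)=\{0\}$ (for the functional calculus and the invertibility of $\psi(T)$) and by the cyclicity of each $T^m x$.
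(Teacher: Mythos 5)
Your proof is correct, but it takes a genuinely different route from the paper's. The paper argues by contradiction and by duality: assuming $y=\sum_k\alpha_kT^kx=K_xh$ is non-cyclic, it picks a unit vector orthogonal to $\bv_{n}T^ny$, pushes it through $K_x^*$ to obtain an entire function in $H^2$ that is non-cyclic for the backward shift $S^*$, and then invokes the Douglas--Shapiro--Shields theorem (Theorem \ref{DSS}) to force that function to be a polynomial, contradicting the cyclicity of $T^mx$ for large $m$; the hypothesis $\sigma_p(T)=\emptyset$ enters there (via Proposition \ref{two conditions make A intransitive}) only to guarantee $K_xh\neq0$. You instead compute $\bv_{n}T^ny$ directly: Beurling's theorem on the forward shift identifies $\bv_{n}S^nh$ with $\phi H^2$ for the inner factor $\phi$ of $h$, the intertwining $K_x(\phi g)=\phi(T)K_xg$ (your Fubini step is indeed justified, since $\|T^j\|$ decays super-exponentially while the Taylor coefficients of $\phi$ and $g$ are square-summable) converts this into $\ol{\ran\phi(T)}$, and the spectral mapping theorem inverts the zero-free inner factor to leave $\ol{\ran T^m}$, which equals $H^2$ because it is closed, $T$-invariant and contains the cyclic vector $T^mx$. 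Your approach buys three things: it avoids Douglas--Shapiro--Shields entirely, using only tools already deployed in Section 1 of the paper (Beurling plus the same spectral-mapping trick for inner functions used in Proposition \ref{two conditions make A intransitive}); it yields the sharper identification $\bv_{n}T^ny=\ol{\ran T^m}$ with $m$ the order of vanishing at $0$ of the inner factor of $\sum_k\alpha_kz^k$; and, as you note, it dispenses with the hypothesis $\sigma_p(T)=\emptyset$, since if $K_xh=0$ your chain of equalities would force $T^m=0$ and hence contradict the cyclicity of $T^mx$. One cosmetic point: $K_x(H^2)$ is not literally $\spn\{T^nx\}$, only squeezed between that span and its closure, but density is all your argument needs.
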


\noindent \textbf{Structure of the paper.} In Section 1 we cover some preliminaries on Hardy spaces, introduce the notion of shift representation operators and study the properties. The proofs of
Theorem \ref{A} and Theorem \ref{B} are in Section 2.

In Section 3 we quickly provide some background on Hankel operators,
and then carry out the properties of the shift representation operators (Proposition \ref{Rosenblum 1} that Proposition \ref{two conditions make A intransitive}) to prove Theorem \ref{C}, which is an equivalent of the Invariant Subspace Problem via the injectivity of certain Hankel operators.
In Section 4 we prove Theorem \ref{D} and conclude the paper by giving a partial answer to a problem of Halmos.


\section{Rosenblum operators and shift representation operators}

In this section, we will explore the finer structure of $K$ in $\ker\tau_{A, S}$ to describe properties of the invariant subspaces of an operator $A$. This plays a key role in the paper. We first review briefly some of the facts about Hardy spaces which will be needed later.

Let $\mathbb{T}$ be the unit circle in the complex plane with the normalized Lebesgue measure $m$.
For $p=2, \infty$,
let $H^p$ be the classical Hardy space of all functions in $L^p(\mathbb{T})$ that have analytic extensions to
the open disk $\mathbb{D}$. Throughout the paper, we always use $\{e_n\}_{n=0}^\infty$ to denote the canonical orthonormal basis $\{z^n\}_{n=0}^\infty$ for $H^2$.
Let $S$ be the {\it unilateral shift of multiplicity 1}, which is the unique operator $S$ such that $Se_n=e_{n+1}$ for $n=0,1,2,\cdots$.
Up to Hilbert space isomorphism, $\H$, $H^2$, $l^2$ are the same, there is no harm in letting $S$ also denote the unilateral shift operator in $B(\H)$ or $B(l^2)$.

\begin{defn}
A measurable function $\phi$ on $\mathbb{T}$ is inner if $\phi\in H^2$ and $|\phi|=1$ a.e.. A function in $H^2$ is said to be outer if it is a cyclic
vector for $S$.
\end{defn}

Next we shall record a number of preliminary facts regarding inner and outer functions.
Every function other than $0$ in $H^2$ can be written as the
product of an inner function and an outer function. This factorization is
unique up to constant multipliers. A Blaschke product is an inner function of the form
\[B(z)=cz^k\prod_{j=1}^\infty \frac{\overline{\lambda}_j}{|\lambda_j|}\frac{\lambda_j-z}{1-\overline{\lambda}_jz}\]
with $k$ a non-negative integer, $|c|=1$, $\{\lambda_j\}_{j=1}^{\infty}$ a sequence of non-zero complex numbers of modulus less than 1 such that $\sum_{j=1}^{\infty}(1-|\lambda_j|)<\infty$.
A singular inner function is an inner function of the form
\[S(z)=\exp(-\int_{\mathbb{T}}\frac{w+z}{w-z}d\mu(w)),\]
where $\mu$ is a finite positive Borel measure on $\mathbb{T}$ which is singular with respect to Lebesgue measure.

It is well known that every inner function
is the product of a singular inner function and a Blaschke product.
By using the notion of inner functions, Beurling gave a complete description of the invariant subspace lattice of $S$ \cite{Beu48}.

\begin{thm}[Beurling]\label{Beurling}
A non-zero subspace $\M$ of $H^2$ is invariant under the unilateral shift $S$ if and only if $\M=\phi H^2$ for some inner function $\phi$.
\end{thm}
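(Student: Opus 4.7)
The plan is to prove the two directions separately, with the nontrivial direction handled by the wandering subspace method. For the easy direction, if $\phi$ is inner then multiplication by $\phi$ is an isometry on $H^2$ (since $|\phi|=1$ a.e.), so $\phi H^2$ is closed, and it is $S$-invariant because $S(\phi f)=\phi(zf)\in\phi H^2$.

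For the forward direction, let $\M$ be a nonzero closed $S$-invariant subspace. Since $S$ is an isometry, $S\M$ is closed in $\M$, and I would form the wandering subspace $\W:=\M\ominus S\M$. I first rule out $S\M=\M$: iterating would give $\M=S^n\M\subset z^n H^2$ for every $n\ge 0$, forcing $\M\subset\bigcap_{n\ge 0} z^n H^2=\{0\}$, a contradiction. So $\W$ is nonzero, and I pick a unit vector $\phi\in\W$.

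The crucial step is proving that this $\phi$ is inner. For each $n\ge 1$, $S^n\phi=z^n\phi\in S\M$, hence $\phi\perp z^n\phi$ in $H^2$, i.e.\ $\int_{\bT}|\phi|^2\bar z^n\,dm=0$. Complex conjugation handles the case $n\le -1$, so every nonzero Fourier coefficient of the $L^1$ function $|\phi|^2$ vanishes. Thus $|\phi|^2$ is a.e.\ constant, and $\|\phi\|_2=1$ forces $|\phi|=1$ a.e. The inclusion $\phi H^2\subset\M$ then follows, because $z^n\phi\in\M$ for all $n\ge 0$ and their closed linear span is exactly $\phi H^2$ (as $M_\phi$ is an isometry).

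For the reverse inclusion, I would show $\dim\W=1$: if $\phi,\psi\in\W$ were orthogonal unit vectors, both would be inner by the same argument, and $\phi\perp z^n\psi$ would hold for every $n\in\bZ$ (use $z^n\psi\in S\M$ for $n\ge 1$, the orthogonality hypothesis at $n=0$, and the symmetric relation for $n\le -1$), so $\phi\bar\psi$ would have all Fourier coefficients zero and vanish a.e., contradicting $|\phi|=|\psi|=1$. Combined with $\bigcap_n S^n\M\subset\bigcap_n z^n H^2=\{0\}$, the Wold decomposition for the isometry $S|_{\M}$ gives $\M=\bigoplus_{n\ge 0} S^n\W=\phi H^2$. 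The main obstacle is the inner-function step: turning the orthogonality relations into the a.e.\ constancy of $|\phi|^2$ via Fourier analysis is the analytic heart of the theorem; the remaining one-dimensionality of $\W$ and the Wold decomposition are largely organizational.
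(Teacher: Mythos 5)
The paper does not prove this statement: Theorem \ref{Beurling} is quoted as a classical result with a citation to Beurling's 1948 paper, so there is no in-text proof to compare against. Your argument is the standard wandering-subspace proof (due essentially to Halmos/Helson): the easy direction via the isometry $M_\phi$, then $\W=\M\ominus S\M\neq\{0\}$, the Fourier-coefficient computation showing any unit vector of $\W$ is inner, the orthogonality argument forcing $\dim\W=1$, and the Wold decomposition of $S|_{\M}$ with trivial residual part $\bigcap_n S^n\M\subset\bigcap_n z^nH^2=\{0\}$. Each step is correct as written (in particular the identification of the Fourier coefficients of $|\phi|^2$ and of $\phi\overline{\psi}$, and the density of $\operatorname{span}\{z^n\phi\}$ in $\phi H^2$), so the proposal is a complete and valid proof of the cited theorem, independent of anything else in the paper.
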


\begin{defn}
Let $\beta\doteq \{\beta_n\}_{n=0}^{\infty}$
 be a sequence of positive numbers with $\underset{n\geq 0}{\sup}\frac{\beta_{n+1}}{\beta_n}<\infty$. Let $S_\beta\in B(H^2)$ denote the injective unilateral weighted shift operator $S_\beta e_n=w_ne_{n+1}$, where $w_n=\frac{\beta_{n+1}}{\beta_n}$, $n\geq 0$. In particular, if $\beta_n=1$, $\forall n\in \{0\}\cup \bN$, then $S_\beta=S$.
\end{defn}

\begin{defn}
If $A,B \in B(\H)$ (and more generally, if $A,B \in \A$, a unital Banach algebra), we define the \textbf{Rosenblum operator}
\[
\begin{array}{rccc}
	\tau_{A, B}: & B(\H) & \to &B(\H), \\
		& X & \mapsto & A X - X B.
\end{array} \]
\end{defn}

\begin{prop}\label{bimodule}
Let $A,B\in B(\H)$. Then
\[\{TKR: T\in \{A\}', K\in \ker\tau_{A,B}, R\in\{B\}'\}\subset \ker \tau_{A,B},\]
where $\ker \tau_{A,B}\doteq \{X\in B(\H): \tau_{A,B}(X)=AX-XB=0 \}$.
\end{prop}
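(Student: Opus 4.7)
The plan is to verify the inclusion by a direct algebraic computation, using each of the three commutation/intertwining hypotheses exactly once. Fix $T\in\{A\}'$, $K\in\ker\tau_{A,B}$, and $R\in\{B\}'$; the goal is to show $\tau_{A,B}(TKR)=A(TKR)-(TKR)B=0$.

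The natural move is to slide $A$ from the left of $TKR$ to the right of it, one factor at a time. First, because $T$ commutes with $A$, I can rewrite $A(TKR)=(AT)KR=(TA)KR=T(AK)R$. Next, the hypothesis $K\in\ker\tau_{A,B}$ gives $AK=KB$, so $T(AK)R=T(KB)R=TK(BR)$. Finally, since $R$ commutes with $B$, $BR=RB$, so $TK(BR)=TK(RB)=(TKR)B$. Stringing these equalities together yields $A(TKR)=(TKR)B$, i.e.\ $TKR\in\ker\tau_{A,B}$.

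The only thing to be careful about is the order of multiplication, since $B(\H)$ is noncommutative: each of the three steps uses commutativity only with the factor that is literally adjacent to $A$ at that stage, so the chain of equalities is legitimate. There is no real obstacle here; the proposition simply records the standard fact that $\ker\tau_{A,B}$ is a $\{A\}'$--$\{B\}'$-bimodule under left and right multiplication, and this bimodule structure is what will later allow the authors to manipulate shift representation operators $K_x$ freely by elements of $\{A\}'$ and $\{S\}'$.
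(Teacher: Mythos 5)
Your proof is correct and is essentially the paper's own argument: the paper condenses the same three uses of $AT=TA$, $AK=KB$, and $BR=RB$ into the single identity $ATKR-TKRB=T(AK-KB)R=0$, which is exactly your step-by-step computation written in one line. Nothing is missing.
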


\begin{proof}
For each $T\in \{A\}'$, $R\in \{B\}'$, $K\in \ker\tau_{A,B}$, we have
\[ATKR-TKRB=T(AK-KB)R=0.\]
 Hence,
\[\{TKR: T\in \{A\}', K\in \ker\tau_{A,B}, R\in\{B\}'\}\subset \ker \tau_{A,B}.\]
\end{proof}

\begin{prop}\label{Rosenblum 1}
Let $A\in B(H^2)$, and $\beta\doteq \{\beta_n\}$
 be a sequence of positive numbers with $\beta_0=1$ and  $\underset{n\geq 0}{\sup}\frac{\beta_{n+1}}{\beta_n}<\infty$. Let $S_\beta$ denote the injective unilateral shift operator $S_\beta e_n=w_ne_{n+1}$, where $w_n=\frac{\beta_{n+1}}{\beta_n}$, $n\geq 0$.  Suppose that
 $\{\frac{\|A^n\|}{\beta_n}\}_{n=0}^{\infty}\in l^2$.
Then the following hold.
\begin{enumerate}
\item For $f\in H^2$, $K_f\doteq\sum_{n=0}^\infty \frac{A^n f}{\beta_n}\otimes e_n\in K(H^2)\cap \ker \tau_{A,S_\beta}$, and with respect to $\{e_n\}_{n=0}^\infty$,
$K_f$ has the form
\[\begin{bmatrix}
    \langle f,e_0\rangle&\langle \frac{Af}{\beta_1},e_0\rangle&\cdots&\langle \frac{A^kf}{\beta_k},e_0\rangle&\cdots\\
    \langle f,e_1\rangle& \langle\frac{Af}{\beta_1},e_1\rangle&\cdots&\langle \frac{A^kf}{\beta_k},e_1\rangle&\cdots\\
\vdots&\vdots&\ddots&\vdots&\ddots\\
\langle f,e_k\rangle& \langle \frac{Af}{\beta_1},e_k\rangle&\cdots&\langle \frac{A^kf}{\beta_k},e_k\rangle&\cdots\\
\vdots&\vdots&\ddots&\vdots&\ddots\\
\end{bmatrix}.\]

\item There is a one-one linear correspondence between $f\in H^2$ and $K\in \ker\tau_{A,S_\beta}$ such that $K=K_f$.

\item $\ker\tau_{A,S_\beta}$ is an infinite dimensional subspace of $K(H^2)$.
\end{enumerate}
\end{prop}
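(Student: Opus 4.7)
The plan is to handle the three parts in order, since each relies on the previous one, and the only nontrivial technical ingredient is the norm estimate underlying compactness.

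For part (1), I would first verify the matrix representation directly from the identity $(u\otimes v)h=\langle h,v\rangle u$, which makes the $(i,j)$-entry of $K_f$ equal to $\langle A^jf/\beta_j,e_i\rangle$, matching the displayed matrix. To get compactness, I would truncate, setting $K_f^N=\sum_{n=0}^N (A^nf/\beta_n)\otimes e_n$, which is finite rank, and show $\{K_f^N\}$ is Cauchy in norm. For any $h\in H^2$, Cauchy-Schwarz gives
\[
\Bigl\|(K_f^N-K_f^M)h\Bigr\|\le \|f\|\,\|h\|\Bigl(\sum_{n=M+1}^N \tfrac{\|A^n\|^2}{\beta_n^2}\Bigr)^{1/2},
\]
and the hypothesis $\{\|A^n\|/\beta_n\}\in\ell^2$ makes the right-hand side tend to $0$ as $M,N\to\infty$. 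Hence $K_f\in K(H^2)$. For the intertwining $AK_f=K_f S_\beta$, term-by-term one has $A((A^nf/\beta_n)\otimes e_n)=(A^{n+1}f/\beta_n)\otimes e_n$, while from $(u\otimes v)T=u\otimes T^*v$ together with $S_\beta^*e_n=(\beta_n/\beta_{n-1})e_{n-1}$ for $n\ge 1$ and $S_\beta^*e_0=0$, the product $K_fS_\beta$ becomes $\sum_{n\ge 1}(A^nf/\beta_{n-1})\otimes e_{n-1}=\sum_{m\ge 0}(A^{m+1}f/\beta_m)\otimes e_m$. The two sums agree, so $K_f\in\ker\tau_{A,S_\beta}$.

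For part (2), linearity of $f\mapsto K_f$ is built into the definition, and since $K_fe_0=f$, this map is injective. The main work is surjectivity: given $K\in\ker\tau_{A,S_\beta}$, set $f\doteq Ke_0\in H^2$. Iterating $AK=KS_\beta$ yields $A^nK=KS_\beta^n$, and because $\beta_0=1$, a quick induction on $n$ using $S_\beta e_n=(\beta_{n+1}/\beta_n)e_{n+1}$ gives the telescoping identity $S_\beta^n e_0=\beta_n e_n$. Consequently $A^nf=A^nKe_0=KS_\beta^ne_0=\beta_n\,Ke_n$, so $Ke_n=A^nf/\beta_n$ for every $n\ge 0$. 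Since $\{e_n\}$ is an orthonormal basis and both $K$ and $K_f$ are bounded, this forces $K=K_f$.

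For part (3), the injective linear map $H^2\to\ker\tau_{A,S_\beta}$, $f\mapsto K_f$, shows $\dim\ker\tau_{A,S_\beta}=\infty$, while part (1) places the image in $K(H^2)$. Closedness of $\ker\tau_{A,S_\beta}$ inside $K(H^2)$ follows because $\tau_{A,S_\beta}$ is norm-continuous on $B(H^2)$.

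I do not expect a serious obstacle here; the cleanest step is the intertwining calculation (routine once the rank-one algebra and the values of $S_\beta^*e_n$ are in hand), and the only step that uses the hypothesis in a nontrivial way is the Cauchy estimate for compactness in part (1). The main care needed is bookkeeping the weight $\beta_n$ so that the $\beta_0=1$ normalization lines up correctly in the reconstruction $Ke_n=A^nf/\beta_n$ of part (2).
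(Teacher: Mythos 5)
Your proposal is correct and follows essentially the same route as the paper: the same Cauchy/$\ell^2$ estimate for compactness, the same termwise verification of $AK_f=K_fS_\beta$, and the same reconstruction $Ke_n=A^nf/\beta_n$ from $f=Ke_0$ (you phrase it via $S_\beta^ne_0=\beta_ne_n$, the paper via a one-step recursion, which is only a cosmetic difference).
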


\begin{proof}
(1) Given $f\in H^2$. For $x\in H^2$, $m\in \bN$,
\[\|(\sum_{n=N+1}^{N+m}e_n\otimes \frac{A^nf}{\beta_n})x\|^2=\sum_{n=N+1}^{N+m}|\langle x, \frac{A^nf}{\beta_n}\rangle|^2
\leq \sum_{n=N+1}^{N+m}(\frac{\|A^n\|}{|\beta_n|})^2\|f\|^2\|x\|^2.\]
Since
\[\{\frac{\|A^n\|}{\beta_n}\}_{n=0}^{\infty}\in l^2,\] it follows that
\[\{\sum_{n=0}^{k}e_n\otimes \frac{A^nf}{\beta_n}\}_{k=0}^\infty\] is a Cauchy
sequence in $K(H^2)$, and hence so is
\[\{\sum_{n=0}^{k}\frac{A^nf}{\beta_n}\otimes e_n\}_{k=0}^\infty.\]
Thus,
\[K_f\doteq \sum_{n=0}^\infty \frac{A^nf}{\beta_n}\otimes e_n\in K(H^2).\]
 Note that
\[AK_fe_n=A(\frac{A^n}{\beta_n}f)=\frac{A^{n+1}}{\beta_n}f,\]
while
\[K_fS_\beta e_{n}=K_f\frac{\beta_{n+1}}{\beta_n}e_{n+1}=\frac{A^{n+1}}{\beta_n}f,\]
for $n\geq 0$.
Thus, $K_f\in \ker\tau_{A,S_\beta}$.
Then it is routine to check that $K_f$ has the desired matrix representation.

(2) It is straightforward to check that the corresponding between $f\in H^2$ and $K_f\in \ker\tau_{A,S_\beta}$ is linear.
If $K_f=K_g$ for $f,g\in H^2$,
then by comparing the zero-th column of the matrix representation of $K_f$ and $K_g$, we have
\[\langle f,e_n\rangle=\langle g,e_n\rangle,~~\forall n\geq 0.\]
 Hence, $f=g$. In other words, the corresponding between $f\in H^2$ and $K_f\in \ker\tau_{A,S}$ is one to one.

Conversely, if $K\in \ker\tau_{A,S_\beta}$, then for each $n\geq 0$,
\[
Ke_{n+1}=\frac{\beta_{n}}{\beta_{n+1}}KS_\beta e_n=\frac{\beta_{n}}{\beta_{n+1}}AKe_n.
\]
Let $f=Ke_0$. Then by induction, we have
\[Ke_n=\frac{\beta_0}{\beta_n}A^nKe_0=\frac{A^nf}{\beta_n}=K_fe_n,~~n\geq 0.\]
Therefore, $K=K_f$.

(3) The proof follows immediately from (2) and the fact that $\ker \tau_{A,S_\beta}$ is closed.
\end{proof}

From now on, we call such operator $K_x$ as a \textbf{shift representation operator}. With the help of $K_x$, we could find some hidden analytic structure and geometric structure of $A$.
The following result will prove very useful.

\begin{prop}\label{two conditions make A intransitive}
Suppose that $A\in B(H^2)$ with $r(A)<1$, and $0\neq K\in \ker\tau_{A,S}$. Then the following statements hold.
\begin{enumerate}
\item If $0\in \sigma_p(K)$, then $A$ has eigenvalues.
\item If $\overline{\ran K}\neq H^2$, then $A$ is intransitive.
\end{enumerate}
\end{prop}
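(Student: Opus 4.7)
The plan is to apply Proposition~\ref{Rosenblum 1} with $\beta_n \equiv 1$ and $S_\beta = S$: since $r(A) < 1$, the sequence $\{\|A^n\|\}_{n=0}^\infty$ decays geometrically and therefore lies in $\ell^2$, so there is a unique nonzero $f \in H^2$ with $K = K_f = \sum_{n=0}^\infty A^n f \otimes e_n$. The whole argument hinges on the observation that, for any $h = \sum_{n=0}^\infty h_n e_n \in H^2$,
\[
K_f h \;=\; \sum_{n=0}^\infty h_n A^n f \;=\; h(A)\,f,
\]
where $h(A) \in B(H^2)$ is defined by the Riesz functional calculus ($h$ is holomorphic on $\mathbb{D}$, which contains $\sigma(A)$ in its interior), and the series converges in operator norm thanks to $|h_n| \leq \|h\|_2$ combined with the geometric decay of $\|A^n\|$. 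In short, $K_f$ is identified with the map $h \mapsto h(A)f$.

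For (2), this identification immediately yields $\overline{\operatorname{ran} K_f} = \bigvee_{n \geq 0} A^n f$: polynomials are dense in $H^2$ and $K_f p = p(A)f$ for any polynomial $p$, so the closure of the range coincides with the closed linear span of $\{A^n f\}_{n \geq 0}$. This subspace is $A$-invariant, contains the nonzero vector $f$, and by hypothesis is proper in $H^2$; hence it is a nontrivial invariant subspace for $A$, so $A$ is intransitive.

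For (1), suppose $0 \in \sigma_p(K_f)$, so there exists $0 \neq g \in H^2$ with $g(A) f = 0$. Since $\sigma(A)$ is contained in the closed disk of radius $r(A) < 1$, which is compactly contained in $\mathbb{D}$, the zero set $Z(g) \cap \sigma(A)$ is finite. It is also nonempty: $g(A)$ is not injective, hence not invertible, so $0 \in \sigma(g(A)) = g(\sigma(A))$ by the spectral mapping theorem. Writing $Z(g) \cap \sigma(A) = \{\lambda_1, \ldots, \lambda_m\}$ with orders $k_j \geq 1$, one factors
\[
g(z) \;=\; \Bigl(\prod_{j=1}^m (z - \lambda_j)^{k_j}\Bigr)\, h(z)
\]
on $\mathbb{D}$, with $h$ holomorphic and nonvanishing at every $\lambda_j$, hence nowhere zero on $\sigma(A)$; a second use of the spectral mapping theorem then makes $h(A)$ invertible. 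Setting $\tilde f = h(A) f \neq 0$ gives $\prod_{j=1}^m (A - \lambda_j I)^{k_j} \tilde f = 0$, and since the commuting factors $(A - \lambda_j I)^{k_j}$ cannot all be injective, some $\lambda_j$ is an eigenvalue of $A$. The main obstacle is precisely this final step, turning the algebraic identity $g(A)f = 0$ into a genuine point eigenvalue via the factorization trick; once the finite nonempty zero set of $g$ in $\sigma(A)$ is in hand and one checks that $h(A)$ is invertible, the argument closes quickly. Part (2), by contrast, is essentially a direct translation of $\overline{\operatorname{ran} K_f}$ into the cyclic subspace $\bigvee_n A^n f$ and requires no further ingredients.
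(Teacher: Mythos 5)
Your proof is correct. Part (2) is in substance the same as the paper's: both exhibit $\overline{\ran K}$ as a proper, nonzero, $A$-invariant subspace (the paper gets invariance directly from $AK=KS$, you get it by computing $\overline{\ran K_f}=\bigvee_{n\ge 0}A^nf$; the two observations are interchangeable). Part (1), however, takes a genuinely different route. The paper applies Beurling's theorem to $\ker K_f\in\Lat S$, writes $\ker K_f=\phi H^2$ for an inner function $\phi$, uses the spectral mapping theorem to rule out $\phi$ being purely singular (so $\phi$ has a zero $z_0\in\bD$), peels off the Blaschke factor $\frac{z_0-z}{1-\overline{z}_0z}$, and produces a one-dimensional $A$-invariant subspace $K(\psi H^2)$ by a dimension count on the gap $\psi H^2\ominus\phi H^2$. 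You instead take a single nonzero $g\in\ker K_f$, translate $K_fg=0$ into $g(A)f=0$, observe that $Z(g)\cap\sigma(A)$ is finite (discreteness of zeros of a nonzero holomorphic function on a compact subset of $\bD$) and nonempty (spectral mapping again), and factor $g=\bigl(\prod_j(z-\lambda_j)^{k_j}\bigr)h$ with $h$ zero-free on $\sigma(A)$, hence $h(A)$ invertible, so that some $\lambda_j$ is an eigenvalue of $A$. Your version avoids Beurling's theorem and the inner/outer machinery entirely, relying only on the holomorphic functional calculus, and it additionally locates the eigenvalue among the zeros of $g$ inside $\sigma(A)$; the paper's route has the side benefit of describing the whole kernel as $\phi H^2$, which fits the Hardy-space theme of the rest of the paper, but for this particular statement your argument is leaner and complete.
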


\begin{proof}
By Proposition \ref{Rosenblum 1}, there exists a unique nonzero vector $f\in H^2$, such that
\[K=K_f\doteq \sum_{n=0}^\infty A^nf\otimes e_n.\]

(1) Set $\M\doteq \ker K_f$. Since $0\neq K_f$ and $0\in \sigma_p(K_f)$, $\M$ is a nontrivial invariant subspace of $S$
as $AK_f=K_fS$. By Theorem \ref{Beurling}, there exists an inner function $\phi\in H^\infty$, such that $\M=\phi H^2$.
Write
\[\phi=\sum_{n=0}^\infty \alpha_nz^n.\]
Note that $r(A)<1$. Then
\[T\doteq \phi(A)=\sum_{n=0}^\infty \alpha_n A^n\in B(H^2).\]
 Since $K_f\phi=0$, we have
\[\sum_{n=0}^\infty \alpha_n A^nf=0,\]
 and hence $Tf=0$.

We claim that
$\phi$ is not a singular inner function.
Otherwise,
 $\phi(z)\neq 0$, $\forall z\in \bD$,  as a singular inner function has no zero points in $\mathbb{D}$. By the spectral mapping theorem,
\[0\notin \phi(\sigma(A))=\sigma(\phi(A))=\sigma(T).\]
This contradicts with $f\in \ker T$. Thus, $\phi$ is not a singular inner function.
So $\phi$ has a zero point $z_0\in \mathbb{D}$. Then there exists inner function $\psi\in H^\infty$,
such that
\[\phi=\frac{z_0-z}{1-\overline{z}_0z}\psi.\]
Set $\N=\psi H^2$.
Note that $\dim(\N\ominus \M)=1$ and $K(\M)=\{0\}$. It follows that
\[1\leq \dim K(\N)=\dim K(\M)+\dim K(\N\ominus \M)=\dim K(\N\ominus \M)\leq 1.\]
Observe that
\[A(K(\N))=K(S(\N))\subset K(\N).\]
Hence, $A$ has eigenvalues.

(2) By $AK=KS$, we have
\[A\overline{\ran K}\subset \overline{\ran K}\neq  H^2.\] Thus, $\overline{\ran K}$ is a nontrivial invariant subspace of $A$. That is, $A$ is intransitive.
\end{proof}

\begin{thm}\label{main theorem 0}
Let $A\in B(H^2)$ with $r(A)<1$. If there exist $0\neq \{c_n\}_{n=0}^\infty \in l^2$ and $0\neq f\in H^2$ such that $\sum_{n=0}^\infty c_n A^nf=0$,
then $A$ has eigenvalues.
\end{thm}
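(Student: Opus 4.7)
The plan is to apply the shift representation machinery of Propositions \ref{Rosenblum 1} and \ref{two conditions make A intransitive} with the trivial weight sequence $\beta_n \equiv 1$, so that $S_\beta = S$, and then recognize the hypothesis $\sum c_n A^n f = 0$ as exactly the statement that the shift representation operator $K_f$ has a nonzero kernel vector.

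First I would check that Proposition \ref{Rosenblum 1} applies. Since $r(A) < 1$, Gelfand's spectral radius formula gives $\|A^n\|^{1/n}\to r(A)<1$, so $\|A^n\|$ decays exponentially and in particular $\{\|A^n\|\}_{n=0}^\infty\in l^2$. Hence by Proposition \ref{Rosenblum 1}(1), the operator
\[K_f \doteq \sum_{n=0}^\infty A^n f\otimes e_n\]
is a well-defined element of $K(H^2)\cap \ker\tau_{A,S}$, and by Proposition \ref{Rosenblum 1}(2), $K_f\neq 0$ since $f\neq 0$.

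Next I would exhibit a nonzero kernel vector of $K_f$. Define
\[g\doteq \sum_{n=0}^\infty c_n e_n \in H^2,\]
which is well-defined and nonzero because $\{c_n\}\in l^2\setminus\{0\}$. Using the convention $(A^n f\otimes e_n)h=\langle h,e_n\rangle A^n f$ and $\langle g,e_n\rangle = c_n$, we obtain
\[K_f g = \sum_{n=0}^\infty \langle g,e_n\rangle A^n f = \sum_{n=0}^\infty c_n A^n f = 0\]
by hypothesis. Thus $g\in \ker K_f \setminus\{0\}$, so $0\in\sigma_p(K_f)$.

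Finally, since $K_f$ is a nonzero element of $\ker\tau_{A,S}$ with $0\in\sigma_p(K_f)$, Proposition \ref{two conditions make A intransitive}(1) yields that $A$ has eigenvalues, completing the proof. There is no real obstacle here; the theorem is essentially a direct translation of Proposition \ref{two conditions make A intransitive}(1) once one recognizes that an $l^2$-sequence of coefficients annihilating the orbit $\{A^n f\}$ is precisely a kernel vector of $K_f$ assembled from those coefficients.
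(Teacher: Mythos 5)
Your proposal is correct and follows essentially the same route as the paper's own proof: both construct $K_f=\sum_{n=0}^\infty A^nf\otimes e_n\neq 0$ via Proposition \ref{Rosenblum 1}, observe that the $l^2$ coefficient sequence assembles into a nonzero kernel vector $h=\sum_{n=0}^\infty c_nz^n$ of $K_f$, and invoke Proposition \ref{two conditions make A intransitive}(1). Your explicit verification that $\{\|A^n\|\}\in l^2$ from $r(A)<1$ is a minor addition the paper leaves implicit.
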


\begin{proof}
Since $f\neq 0$, by Proposition \ref{Rosenblum 1},
\[K_f=\sum_{n=0}^\infty A^nf\otimes e_n\neq 0.\]
Since $0\neq \{c_n\}_{n=0}^\infty \in l^2$,
\[h\doteq \sum_{n=0}^\infty c_nz^n\]
is nonzero in $H^2$. Noting that
\[\sum_{n=0}^\infty c_n A^nf=0,\]
we have $K_fh=0$. By Proposition \ref{two conditions make A intransitive},
$A$ has eigenvalues.
\end{proof}

\begin{cor}
Suppose that $A\in B(H^2)$ with $r(A)<1$, and $0\neq K\in \ker\tau_{A,S}$. If there exists $0\neq \M\in \textup{Lat}S$
such that $\overline{K(\M)}\neq H^2$, then $A$ is intransitive.
\end{cor}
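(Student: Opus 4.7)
The plan is to exploit the intertwining relation $AK = KS$ to show that $\overline{K(\M)}$ is an invariant subspace of $A$, and then handle the degenerate case where $K(\M) = \{0\}$ via Proposition \ref{two conditions make A intransitive}(1).

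First, I would observe that for every $x \in \M$, $AKx = KSx \in K(\M)$ since $\M \in \textup{Lat}\,S$. Hence $A(K(\M)) \subset K(\M)$, and by continuity of $A$, the closure $\overline{K(\M)}$ is also invariant under $A$. By hypothesis, $\overline{K(\M)} \neq H^2$.

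Next, I would split into two cases depending on whether $K(\M) = \{0\}$ or not. If $K(\M) \neq \{0\}$, then $\overline{K(\M)}$ is a nonzero proper $A$-invariant subspace, and $A$ is intransitive. If instead $K(\M) = \{0\}$, then since $\M \neq \{0\}$, we have $\ker K \neq \{0\}$, i.e.\ $0 \in \sigma_p(K)$. Proposition \ref{two conditions make A intransitive}(1) then yields that $A$ has a nonzero eigenvector $v$ with eigenvalue $\mu$. The eigenspace $\ker(A - \mu I)$ is a nonzero closed $A$-invariant subspace; if it is proper, we are done, and otherwise $A = \mu I$, in which case every subspace of $H^2$ is invariant under $A$, so $A$ is trivially intransitive.

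There is no real obstacle here; the argument is essentially an immediate application of the established proposition together with the observation about $\overline{K(\M)}$ being $A$-invariant. The only minor subtlety is the bookkeeping about the possibility that eigenspaces could equal $H^2$, which is handled by noting that scalar multiples of the identity are intransitive on an infinite-dimensional space.
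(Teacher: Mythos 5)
Your proof is correct, but it takes a more elementary route than the paper. The paper first reduces to the case where $K$ is injective (the non-injective case being absorbed into Proposition \ref{two conditions make A intransitive}(1), exactly as in your degenerate case), then invokes Beurling's theorem to write $\M=\phi H^2$, forms the multiplication operator $T_\phi\in\{S\}'$, uses Proposition \ref{bimodule} to conclude that $KT_\phi$ is again a nonzero element of $\ker\tau_{A,S}$ with $\overline{\ran KT_\phi}=\overline{K(\M)}\neq H^2$, and finishes by applying Proposition \ref{two conditions make A intransitive}(2) to this new kernel element. You instead bypass Beurling and the bimodule lemma entirely by verifying directly from $AK=KS$ and $S\M\subset\M$ that $A(K(\M))\subset K(\M)$, hence that $\overline{K(\M)}$ is itself a proper $A$-invariant subspace; this is essentially an inlining of the one-line argument that proves part (2) of the proposition. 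What the paper's approach buys is that it stays within its own formalism — it manufactures another shift representation operator and reapplies the established proposition verbatim, which fits the thematic emphasis on $\ker\tau_{A,S}$ as a bimodule; what your approach buys is economy, needing no structure theory of $\Lat S$ at all. Your handling of the case $K(\M)=\{0\}$ (via $\M\subset\ker K$, Proposition \ref{two conditions make A intransitive}(1), and the observation that scalar operators are intransitive) is sound and in fact makes explicit a step the paper leaves implicit when it says ``we may assume that $K$ is injective.''
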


\begin{proof}
According to Proposition \ref{two conditions make A intransitive}, we may assume that $K$ is injective. By Theorem \ref{Beurling}, there exists an inner function $\phi\in H^\infty$, such that $\M=\phi H^2$. Set $T_\phi:H^2\to H^2, f\mapsto \phi f$.
 Then $T_\phi\in \{S\}'$ is an isometry, and $K\in \ker\tau_{A,S}$. It follows from Proposition \ref{bimodule} that $0\neq KT_\phi\in \ker\tau_{A,S}$. Since
\[\overline{\ran KT_\phi}=\overline{K(\M)}\neq H^2.\]
By Proposition \ref{two conditions make A intransitive}, $A$ is intransitive.
\end{proof}

\section{Another version of the Gelfand-Hille Theorem and A refined version of Wermer's Theorem }

In this section, we will give a generalization of Wermer's Theorem.  Furthermore, we also show the existence of nontrivial invariant subspaces for compact perturbations of some quasinilpotent operators
by establishing another version of the Gelfand-Hille Theorem.

The following Lemma is quite crucial. It shows that the growth condition can be weakened in an
asymmetric way.

\begin{lem}\label{L:crucial lemma}
Let $T\in B(\H)$ be an invertible operator. Suppose that there exists a positive integer $r$, a nonzero complex constant $\lambda$ such that
\[\|T^{-n}+\lambda T^{n}\|\sim O(n^r),~~n\to +\infty.\]
Then $\exists M>0$, such that
\[\|T^n(I-T^2)\|\leq M|n|^{r+2},~~\forall n\in \mathbb{Z}\setminus\{0\}.\]
\end{lem}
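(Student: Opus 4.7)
The plan is to express $T^n(I-T^2)$ as an explicit combination, with operator coefficients of norm at most $\|T\|$, of the operators $A_k := T^{-k}+\lambda T^k$ whose norms are controlled by the hypothesis. Since the hypothesis bounds $\|A_k\|$ only as $k\to+\infty$, the combination must be chosen so that one-sided growth information suffices to bound $T^n(I-T^2)$ for \emph{both} positive and negative $n$; this is achieved by a pair of one-step telescoping identities, one producing a negative power of $T$ and one producing a positive power of $T$.

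Concretely, I would first verify by direct expansion the two identities
\[
A_{n+1} - T A_n = T^{-(n+1)}(I-T^2), \qquad T A_n - A_{n-1} = -\lambda\, T^{n-1}(I-T^2).
\]
In each difference three of the four terms cancel in pairs (the $T^{-n+1}$ terms in the first, and the $\lambda T^{n+1}$ terms in the second), leaving an isolated factor of $(I-T^2)$. The first identity surfaces $T^m(I-T^2)$ for \emph{negative} exponents $m=-(n+1)$, while the second one does so for \emph{positive} exponents $m=n-1$; together they cover all $m\in\bZ\setminus\{0\}$ using only $A_k$ with $k\geq 0$, which is exactly the regime where the hypothesis applies.

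Taking norms and applying the triangle inequality then finishes the argument. By hypothesis $\|A_n\| \leq C n^r$ for all sufficiently large $n$, and for the remaining finitely many $n$ the quantity $\|A_n\|$ is in any case finite since $T$ is invertible, so after enlarging $C$ one has $\|A_n\|\leq C(n+1)^r$ for every $n\geq 0$. The two identities above then yield
\[
\|T^{-(n+1)}(I-T^2)\| \leq \|A_{n+1}\| + \|T\|\,\|A_n\|, \quad \|T^{n-1}(I-T^2)\| \leq |\lambda|^{-1}\bigl(\|T\|\,\|A_n\| + \|A_{n-1}\|\bigr),
\]
whose right hand sides are both $O(n^r)$. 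Re-indexing by $m=-(n+1)$ in the first inequality and $m=n-1$ in the second produces a uniform constant $M$ with $\|T^m(I-T^2)\|\leq M|m|^r$ for every $m\in\bZ\setminus\{0\}$, which is in fact strictly stronger than the asserted bound $M|m|^{r+2}$.

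There is no genuine obstacle here: the argument reduces to the two-line identity computation above, and the advertised asymmetry of the hypothesis (growth controlled only as $n\to+\infty$) is absorbed cleanly by the choice of telescoping identities, each of which retains only the ``correct'' half of $A_n$. The extra two powers of $|n|$ in the stated conclusion appear to leave slack for later applications rather than to be forced by this method.
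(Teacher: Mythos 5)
Your proof is correct, and it takes a genuinely more elementary route than the paper's. The two identities
\[
A_{n+1}-TA_n=T^{-(n+1)}(I-T^2),\qquad TA_n-A_{n-1}=-\lambda\,T^{n-1}(I-T^2),
\qquad A_k:=T^{-k}+\lambda T^k,
\]
check out by direct expansion, they cover all exponents in $\bZ\setminus\{0\}$ using only $A_k$ with $k\ge 0$, and the triangle inequality then gives $\|T^m(I-T^2)\|\le M|m|^{r}$, which indeed implies the stated $M|m|^{r+2}$. The paper exploits essentially the same cancellation --- its computation of $(T^{-1}\tilde K_x-\tilde K_xS_\beta)e_n$ is exactly the identity $T^{-1}A_{n+1}-A_{n+2}=\lambda T^n(I-T^2)$ applied to a vector $x$ --- but it packages the sequence $\{A_{n+1}x/\beta_n\}$ into a compact operator $\tilde K_x=\sum_n \beta_n^{-1}A_{n+1}x\otimes e_n$ with weights $\beta_n=(n+1)^{r+2}$, uses $e_n\xrightarrow{w}0$ together with compactness to get $\|T^n(I-T^2)x\|/\beta_n\to 0$ for each $x$, and then invokes the Uniform Boundedness Principle; negative exponents are handled by the symmetry $T\leftrightarrow T^{-1}$ (with $\lambda$ replaced by $\lambda^{-1}$) rather than by a second identity. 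The requirement $\{\|A_{n+1}\|/\beta_n\}\in\ell^2$ is what forces the weight $(n+1)^{r+2}$ and hence the loss of two powers of $n$ in the paper's conclusion; your direct estimate avoids both the functional-analytic machinery and that loss. The paper's formulation does have the side benefit of fitting the lemma into the shift-representation-operator framework used throughout (and its local variant, Corollary \ref{crucial corollary}, is stated for a single vector $x$, which your argument also delivers verbatim by applying the identities to $x$), but nothing in the later applications needs more than the $O(|n|^{r+2})$ bound, so your sharper estimate is simply a bonus.
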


\begin{proof}
Let $\{e_n\}_{n=0}^\infty$ be an orthonormal basis of $\H$.
For $n\in \mathbb{N}\cup\{0\}$, set
\[\beta_n=(n+1)^{r+2}.\]
Then it is easy to see that
\[\{\frac{\|T^{-(n+1)}+\lambda T^{(n+1)}\|}{\beta_n}\}_{n=0}^{\infty}\in l^2.\]
For $x\in \H$, by the similar method used in the proof of Proposition \ref{Rosenblum 1}, one could easily show that
\[\tilde{K}_x=\sum_{n=0}^\infty\frac{(T^{-(n+1)}+\lambda T^{(n+1)})x}{\beta_n}\otimes e_n\in K(\H).\]

Consider the weighted shift operator $S_\beta\in B(\H)$ which satisfies that
\[S_\beta e_n=\frac{\beta_{n+1}}{\beta_n}e_{n+1},~~n\geq 0.\]
Now, we get that
\[\tau_{T^{-1},S_\beta}(\tilde{K}_x)=T^{-1}\tilde{K}_x-\tilde{K}_xS_\beta\in K(\H).\]
Noting that $e_n\overset{w}{\to} 0$, we can deduce that
\begin{align*}
0=\underset{n\to \infty}{\lim}\|(T^{-1}\tilde{K}_x-\tilde{K}_xS_\beta)e_n\|
&=\underset{n\to \infty}{\lim}\|\frac{(T^{-(n+2)}+\lambda T^n)x}{\beta_n}-\frac{(T^{-(n+2)}+\lambda T^{(n+2)})x}{\beta_n}\|\\
&=\underset{n\to \infty}{\lim}|\lambda|\|\frac{T^n(I-T^2)x}{\beta_n}\|.
\end{align*}
Hence, the Uniform Boundedness Principle implies the existence of a positive number $M_1$
such that
\[\|\frac{T^n(I-T^2)}{\beta_n}\|\leq M_1,~~\forall n\geq 1.\]
Hence,
\[\|T^n(I-T^2)\|\leq M_1\cdot \beta_n=M_1\cdot (n+1)^{r+2}\leq 2^{r+2}M_1\cdot n^{r+2},~~\forall n\geq 1.\]

By the symmetry of $T^{-1}$ and $T$, there exists a positive number $M_2$,
such that
 \[\|(T^{-1})^n(I-(T^{-1})^2)\|\leq M_2\cdot n^{r+2},~~\forall n\geq 1.\]
Thus,
\[\|T^{-n}(I-T^2)\|\leq M_2\cdot\|T^2\|\cdot n^{r+2},~~\forall n\geq 1.\]
Set
\[M=\max\{2^{r+2}M_1, M_2\cdot\|T^2\|\}.\]
Then the proof is completed.
\end{proof}

The following Corollary is a local version of Lemma \ref{L:crucial lemma} which could be proved similarly.

\begin{cor}\label{crucial corollary}
Let $T\in B(\H)$ be an invertible operator. Suppose that there exist $x\in \H$ and a positive integer $r$, and a nonzero complex constant $\lambda$ such that
\[\|(T^{-n}+\lambda T^{n})x\|\sim O(n^r),~~~~n\to  +\infty.\]
Then $\exists M>0$, such that
\[\|T^n(I-T^2)x\|\leq M|n|^{r+2},~~\forall n\in \mathbb{Z}\setminus\{0\}.\]
\end{cor}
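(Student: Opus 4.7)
The plan is to mimic the proof of Lemma~\ref{L:crucial lemma} essentially verbatim, exploiting the fact that the \emph{local} growth hypothesis on $x$ alone suffices to build the required compact operator. Setting $\beta_n=(n+1)^{r+2}$, the assumption yields $\|(T^{-(n+1)}+\lambda T^{n+1})x\|/\beta_n=O(n^{-2})$, which is $l^2$-summable; consequently
\[\tilde K_x := \sum_{n=0}^{\infty}\frac{(T^{-(n+1)}+\lambda T^{n+1})x}{\beta_n}\otimes e_n\]
converges in $K(\H)$ by the same Cauchy argument used in Proposition~\ref{Rosenblum 1}(1) (noting that $\|u\otimes e_n\|=\|u\|$). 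The telescoping computation carried out in Lemma~\ref{L:crucial lemma} then gives
\[(T^{-1}\tilde K_x-\tilde K_x S_\beta)e_n \;=\; \frac{\lambda\,T^n(I-T^2)x}{\beta_n}.\]

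Since $\tilde K_x$ is compact and $S_\beta$ is bounded, the difference $T^{-1}\tilde K_x-\tilde K_x S_\beta$ lies in $K(\H)$, so applying it to the weakly null sequence $e_n$ forces the displayed norms to tend to $0$. Hence $\|T^n(I-T^2)x\|/\beta_n$ is a convergent, and therefore bounded, scalar sequence, giving some $M_1>0$ with
\[\|T^n(I-T^2)x\|\leq 2^{r+2}M_1\,n^{r+2}\qquad(n\geq 1).\]
Observe that the Uniform Boundedness Principle invoked in Lemma~\ref{L:crucial lemma} is not needed here, because we are working with a single vector rather than with a parametrized family of vectors.

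To cover the negative indices, I would exploit the symmetry between $T$ and $T^{-1}$: since $T^{-n}+\lambda T^n=\lambda(T^n+\lambda^{-1}T^{-n})$, the hypothesis is precisely a growth condition of the same form for the pair $(T^{-1},\lambda^{-1})$ in place of $(T,\lambda)$. Running the argument above with this substitution produces a constant $M_2>0$ with $\|T^{-n}(I-T^{-2})x\|\leq M_2\,n^{r+2}$ for $n\geq 1$. The identity $T^{-n}(I-T^2)=-T^2\cdot T^{-n}(I-T^{-2})$ then upgrades this to $\|T^{-n}(I-T^2)x\|\leq M_2\|T^2\|\,n^{r+2}$. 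Taking $M=\max\{2^{r+2}M_1,\,M_2\|T^2\|\}$ completes the bound on all of $\bZ\setminus\{0\}$. I do not anticipate a serious obstacle, as the proof is formally parallel to that of Lemma~\ref{L:crucial lemma}; the only point requiring attention is that the pointwise growth on $x$ is strong enough to make $\tilde K_x$ compact, which the estimate $\|(T^{-(n+1)}+\lambda T^{n+1})x\|^2/\beta_n^2=O(n^{-4})$ confirms.
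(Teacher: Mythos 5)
Your proposal is correct and is exactly the adaptation the paper has in mind: the paper gives no separate proof of Corollary~\ref{crucial corollary}, stating only that it ``could be proved similarly'' to Lemma~\ref{L:crucial lemma}, and your argument carries out that adaptation faithfully (the compactness of $\tilde K_x$, the telescoping identity, the weak nullity of $e_n$, and the $T\leftrightarrow T^{-1}$ symmetry with the factor $\|T^2\|$). Your observation that the Uniform Boundedness Principle becomes unnecessary in the local setting, since a convergent scalar sequence is automatically bounded, is also accurate.
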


Recall that a subspace $\mathcal{M}$ is {\it hyperinvariant} for $A$ if $\mathcal{M}$ is invariant under every $B\in \{A\}'$.  The following theorem is a version of \cite[Theorem 1.2]{Atz84} due to Atzmon.

\begin{thm}\label{Atzmon}
Let $A$ be an invertible operator in $B(\H)$ and let $x_0,y_0\in \H$ be non zero vectors.
Suppose that
\[\|A^nx_0\|+\|A^{*n}y_0\|\sim O(|n|^r),~~n\to \pm \infty,\]
for some integer $r>0$.
Then $A$ is a multiple of the identity operator or $A$ has a non trivial hyperinvariant subspace.
\end{thm}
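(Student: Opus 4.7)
The plan is to use the polynomial growth of $\|A^n x_0\|$ and $\|A^{*n} y_0\|$ to build a distributional functional calculus on $\mathbb{T}$ supported on $\sigma(A)$, and then invoke a harmonic analysis construction to exhibit a non-scalar element of the commutant with non-dense range, producing the hyperinvariant subspace.

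First I would observe that the two-sided polynomial growth forces the local resolvents of $x_0$ (for $A$) and of $y_0$ (for $A^*$) to extend analytically to $\mathbb{C}\setminus\mathbb{T}$: indeed $\sum_{n\geq 0} A^n x_0\,\zeta^{-n-1}$ converges for $|\zeta|>1$ while $\sum_{n\geq 1} A^{-n}x_0\,\zeta^{n-1}$ converges for $|\zeta|<1$, and both give the local resolvent $R(\zeta)x_0$ on the respective side. Together with the analogous statement for $A^*y_0$, this will let us treat $A$ as an ``operator with spectrum on $\mathbb{T}$'' from the point of view of the pair $(x_0,y_0)$. Next, for any trigonometric polynomial $p(z)=\sum_{|n|\le N}c_n z^n$ define $p(A)=\sum_{|n|\le N}c_n A^n$ and note that
\[
|\langle p(A)x_0,y_0\rangle|\le \sum_{|n|\le N}|c_n|\,\|A^n x_0\|\,\|y_0\|\le C\sum_{|n|\le N}(1+|n|)^r|c_n|.
\]
Hence $\Phi:p\mapsto \langle p(A)x_0,y_0\rangle$ extends continuously to the Beurling algebra $\mathcal{A}_r(\mathbb{T})$ of functions with $\sum(1+|n|)^r|\widehat{f}(n)|<\infty$, i.e.\ $\Phi$ is a distribution on $\mathbb{T}$ of order $\le r$, and its closed support $\Sigma\subset\mathbb{T}$ is the natural local spectrum of the pair $(x_0,y_0)$.

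The core idea is now the following dichotomy. Assume $A$ is not a scalar, so that one may also assume $\overline{\mathrm{span}}\{A^n x_0:n\in\mathbb{Z}\}=\mathcal{H}$ and $\overline{\mathrm{span}}\{A^{*n}y_0:n\in\mathbb{Z}\}=\mathcal{H}$ (otherwise these spans are already proper hyperinvariant subspaces). Under this non-degeneracy, $\Phi$ is non-zero, and for any $f\in\mathcal{A}_r$ the series $\sum_n \widehat{f}(n)A^n$ converges in an appropriate weak sense to an operator $f(A)\in\{A\}'$, satisfying $\langle f(A)g(A)x_0,y_0\rangle=\Phi(fg)$ for trigonometric polynomials $g$. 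Thus if I can produce a nonzero $f\in\mathcal{A}_r$ that vanishes on $\Sigma$, bilinearity and density give $\langle f(A)\mathcal{H},y_0\rangle=0$, so $f(A)^*y_0=0$; this forces $f(A)$ to be a non-scalar operator in $\{A\}'$ with $\overline{\mathrm{ran}\,f(A)}\neq\mathcal{H}$, and that closed range is the desired hyperinvariant subspace.

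The substantive obstacle is therefore the construction of a non-trivial $f\in\mathcal{A}_r$ that vanishes on $\Sigma$ while preserving the prescribed Fourier-coefficient decay. This is where the heavy harmonic analysis enters: the relevant class $\mathcal{A}_r$ is (just barely) non-quasi-analytic, and one uses the Beurling--Domar-type construction of ``cut-off'' or ``divisor'' functions in weighted algebras on $\mathbb{T}$ (ultimately resting on Beurling--Malliavin-type estimates) to manufacture such an $f$ supported off $\Sigma$. Once this analytic input is granted, the operator-theoretic part — passing from $f\in\mathcal{A}_r$ vanishing on $\Sigma$ to a non-trivial hyperinvariant subspace for $A$ — is the short argument above. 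The degenerate case $\Sigma=\mathbb{T}$ does not arise: the local resolvent analysis in the first step forces $\Sigma$ to be a proper closed subset of $\mathbb{T}$ unless $A$ is a scalar, in which case the theorem is trivial.
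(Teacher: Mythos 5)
First, a point of reference: the paper does not prove this statement at all --- it is imported verbatim as a known theorem of Atzmon (\cite[Theorem 1.2]{Atz84}) and used as a black box. So the question is whether your sketch stands on its own, and it does not: the two places where you wave at ``an appropriate weak sense'' and at ``heavy harmonic analysis'' are exactly where the theorem's difficulty lives, and in both places the step as written fails.

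The central claim that for $f\in\mathcal{A}_r$ the series $\sum_n\widehat f(n)A^n$ converges to an operator $f(A)\in\{A\}'$ is not available under the hypotheses. The growth condition is \emph{local}: it controls $\|A^nx_0\|$ and $\|A^{*n}y_0\|$ only, while $\|A^n\|$ may grow exponentially. Consequently only the two \emph{vectors} $\sum_n\widehat f(n)A^nx_0$ and $\sum_n\overline{\widehat f(n)}A^{*n}y_0$ are defined; there is no operator $f(A)$, so the assertions ``$f(A)^*y_0=0$'' and ``$\overline{\ran f(A)}$ is hyperinvariant'' do not make sense. The most your argument can extract from $f\Phi=0$ is a nonzero vector $w=\sum_n\widehat f(n)A^nx_0$ with $\langle A^kw,y_0\rangle=0$ for all $k\in\bZ$, which gives an \emph{invariant} subspace $\bigvee_{k}\{A^kw\}$, not a hyperinvariant one; the same objection applies to your opening reduction, since $\overline{\spn}\{A^nx_0:n\in\bZ\}$ is invariant but not hyperinvariant. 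This loss is not cosmetic: the paper applies the theorem precisely through its hyperinvariance conclusion (in Theorem \ref{T:Another version of wermer} and hence Theorem \ref{A}, where intransitivity of every $T$ commuting with $A$ is deduced).

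Second, the dichotomy does not close. Your concluding claim that $\Sigma=\mathbb{T}$ forces $A$ to be scalar is false: take $A$ the bilateral shift on $\ell^2(\bZ)$ and $x_0=y_0=e_0$, so that $\|A^nx_0\|=\|A^{*n}y_0\|=1$ for all $n$ and $\langle A^nx_0,y_0\rangle=\delta_{n,0}$; then $\Phi$ is normalized Lebesgue measure, $\Sigma=\mathbb{T}$, the cyclicity reductions are satisfied, and yet $A$ is not scalar. In this case no nonzero $f\in\mathcal{A}_r$ is supported off $\Sigma$ and your construction produces nothing. Atzmon's actual proof is built to survive exactly this situation: it works with the $\H$-valued local resolvents of $x_0$ and $y_0$ (which you introduce in your first step but never use), a Liouville/Phragm\'en--Lindel\"of analysis of the resulting analytic functions off $\mathbb{T}$, and spectral subspaces attached to the regular Beurling algebra, with a separate treatment of the singleton-spectrum case via a Gelfand--Hille-type argument. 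The harmonic-analysis input you do invoke --- regularity of $\mathcal{A}_r$ and the existence of functions supported in prescribed arcs --- is correct (Beurling--Domar non-quasianalyticity; Beurling--Malliavin is not needed), but it is the easy part of the proof.
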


We next state a refined version of Wermer's Theorem.

\begin{thm}\label{T:Another version of wermer}
Let $A$ be an invertible operator in $B(\H)$ and let $x,y\in \H$ be non zero vectors.
Suppose that
\[\|A^{-n}x+\lambda A^{n}x\|\sim O(n^r),~~\forall n\in \mathbb{N};\]
and
\[\|(A^*)^{-n}y+\mu (A^*)^{n}y\|\sim O(n^r),~~\forall n\in \mathbb{N};\]
for some integer $r>0$, and nonzero complex constants $\lambda,\mu$.
Then $A$ is a multiple of the identity operator or $A$ has a non trivial hyperinvariant subspace.
\end{thm}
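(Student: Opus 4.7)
The plan is to reduce the statement to Atzmon's Theorem~\ref{Atzmon} by first converting the asymmetric bounds $\|A^{-n}x + \lambda A^n x\| = O(n^r)$ and $\|(A^*)^{-n}y + \mu (A^*)^n y\| = O(n^r)$ into two-sided polynomial growth estimates. Applying Corollary~\ref{crucial corollary} to $T = A$ with vector $x$ and constant $\lambda$ yields a positive constant $M_1$ with
\[\|A^n(I-A^2)x\| \leq M_1 |n|^{r+2}, \quad \forall n \in \bZ\setminus\{0\},\]
and, symmetrically, applying Corollary~\ref{crucial corollary} to $T = A^*$ with vector $y$ and constant $\mu$ gives a constant $M_2$ with
\[\|(A^*)^n(I-A^{*2})y\| \leq M_2 |n|^{r+2}, \quad \forall n \in \bZ\setminus\{0\}.\]

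Next, set $x_0 = (I-A^2)x$ and $y_0 = (I-A^{*2})y$. In the generic case where both $x_0 \neq 0$ and $y_0 \neq 0$, the two estimates combine into
\[\|A^n x_0\| + \|(A^*)^n y_0\| = O(|n|^{r+2}), \quad n \to \pm\infty,\]
which is precisely the hypothesis of Atzmon's theorem (with exponent $r+2$). Therefore either $A$ is a scalar multiple of the identity or $A$ has a nontrivial hyperinvariant subspace, completing the proof in the generic case.

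It remains to handle the degenerate cases $x_0 = 0$ or $y_0 = 0$. Suppose $x_0 = 0$; then $x \in \ker(I-A^2)$, a nonzero closed subspace. Since $I - A^2$ commutes with every element of $\{A\}'$, this kernel is hyperinvariant for $A$ and is nontrivial provided $A^2 \neq I$. If instead $A^2 = I$ but $A \neq \pm I$, then the factorization $(A-I)(A+I) = 0$ with both factors nonzero forces $\ker(A-I)$ and $\ker(A+I)$ to be proper nonzero closed subspaces, each hyperinvariant for $A$. If $A = \pm I$ the conclusion is immediate. The case $y_0 = 0$ is symmetric: $\ker(I-A^{*2})$ is nonzero and hyperinvariant for $A^*$, and its orthogonal complement is hyperinvariant for $A$ via the identity $B \in \{A\}' \Leftrightarrow B^* \in \{A^*\}'$, with the same $A^2 = I$ subcase analysis if necessary. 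The main obstacle is the bookkeeping in this degenerate case---checking nontriviality and the passage from hyperinvariance of $A^*$ back to $A$---rather than any deep new ingredient; the engine of the argument is the asymmetric growth bound of Lemma~\ref{L:crucial lemma}, localized to a single vector, which turns the hypothesis into a standard two-sided bound to which Atzmon's theorem applies directly.
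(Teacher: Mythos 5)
Your proposal is correct and follows essentially the same route as the paper: apply Corollary~\ref{crucial corollary} to $A$ at $x$ and to $A^*$ at $y$, set $x_0=(I-A^2)x$, $y_0=(I-A^{*2})y$, and invoke Atzmon's Theorem~\ref{Atzmon} with exponent $r+2$. The only difference is that you spell out the degenerate cases $x_0=0$ or $y_0=0$ (correctly), which the paper dismisses as well known.
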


\begin{proof}
We may suppose that $A$ is not a multiple of the identity operator.
If $(I-A^2)x=0$ or $(I-(A^*)^2)y=0$, it is well known that $A$
has a non trivial hyperinvariant subspace. Hence, we could further assume
that
\[x_0\doteq (I-A^2)x\neq 0,~~y_0\doteq (I-(A^*)^2)y\neq 0.\]
By Corollary \ref{crucial corollary},
$\exists M>0$, such that
\[\|A^nx_0\|=\|A^n(I-A^2)x\|\leq M|n|^{r+2},~~\forall n\in \mathbb{Z}\setminus\{0\},\]
and
\[\|A^{*n}y_0\|=\|A^{*n}(I-A^{*2})y\|\leq M|n|^{r+2},~~~~\forall n\in \mathbb{Z}\setminus\{0\}.\]
By Theorem \ref{Atzmon}, $A$ has a non trivial hyperinvariant subspace.
\end{proof}

We are now in place to obtain Theorem \ref{A}.

\begin{proof}[\bf Proof of Theorem A]
Since
\[\|A^{-n}+\frac{1}{\lambda} A^{n}\|=\frac{1}{\lambda}\|A^n+\lambda A^{-n}\|\sim O(n^r),~~\forall n\in \bN,\]
clearly
\[\|(A^*)^{-(n)}+\frac{\lambda}{|\lambda|^2} (A^*)^{n}\|\sim O(n^r),~~\forall n\in \bN.\]
By Theorem \ref{T:Another version of wermer},  $A$ has a non trivial hyperinvariant subspace. Since $T\in \{A\}'$, $T$ is intransitive.
\end{proof}


Our second use of Lemma \ref{L:crucial lemma} is to obtain the following version of Gelfand-Hille Theorem.

\begin{thm}\label{NEW Gelfand-Hille}
Let $A\in B(\H)$ and $\sigma(A)=\{1\}$. Suppose that there exist a positive integer $r$, a nonzero complex constant $\lambda$ such that
\[\|A^{-n}+\lambda A^{n}\|\sim O(n^r),~~n\to \infty.\]
Then $(A-I)^{r+4}=0$.
\end{thm}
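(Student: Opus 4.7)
The plan is to extract from the hypothesis a polynomial growth bound on $\|A^n\|$ itself, and then invoke the classical Gelfand--Hille theorem (stated in the introduction) to conclude. The starting point is Lemma \ref{L:crucial lemma}, which applies because $\sigma(A)=\{1\}$ forces $A$ to be invertible: it supplies a constant $M>0$ with $\|A^n(I-A^2)\|\leq M|n|^{r+2}$ for all $n\in \mathbb{Z}\setminus\{0\}$. The first reduction is to remove the factor $I+A$. Since $-1\notin \sigma(A)=\{1\}$, the operator $I+A$ is invertible; writing $N\doteq A-I$ and $I-A^2=-(I+A)N$, multiplication on the right by $(I+A)^{-1}$ turns the previous estimate into
\[
\|A^n N\|\leq M\,\|(I+A)^{-1}\|\cdot |n|^{r+2},\qquad \forall n\in \mathbb{Z}\setminus\{0\}.
\]

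Next I would pass from a polynomial bound on $A^n N$ to one on $A^n$ itself by a telescoping argument, exploiting the identity $A^n N=A^{n+1}-A^n$. For $n\geq 1$,
\[
A^n-I=\sum_{k=1}^n A^{k-1}N\qquad\text{and}\qquad A^{-n}-I=-\sum_{k=1}^n A^{-k}N,
\]
so summing the previous bound yields $\|A^n-I\|=O(n^{r+3})$ and similarly $\|A^{-n}-I\|=O(n^{r+3})$. Hence $\|A^n\|=O(|n|^{r+3})$ as $|n|\to \infty$, and the classical Gelfand--Hille theorem, applied with exponent $r+3$, produces $(A-I)^{r+4}=0$.

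The heart of the argument is the observation that a polynomial bound of degree $r+2$ on the increments $A^n N=A^{n+1}-A^n$ telescopes up to a polynomial bound of degree $r+3$ on $A^n$, costing exactly one extra power in the Gelfand--Hille exponent. I do not foresee any real obstacle: all the analytic content is already packaged into Lemma \ref{L:crucial lemma}, and the remainder is elementary bookkeeping together with the invertibility of $I+A$ that is forced by $\sigma(A)=\{1\}$.
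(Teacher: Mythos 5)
Your proposal is correct, and it follows the paper's proof exactly up to the last step: both arguments invoke Lemma \ref{L:crucial lemma} to get $\|A^n(I-A^2)\|\leq M|n|^{r+2}$ and then strip off the factor $A+I$ using $\sigma(A+I)=\{2\}$, arriving at $\|A^{n+1}-A^n\|=O(|n|^{r+2})$ for $n\in\mathbb{Z}\setminus\{0\}$. Where you diverge is in how this increment bound is converted into nilpotency of $A-I$. The paper at this point cites Theorem 4 of Zem\'{a}nek's survey \cite{Zem94}, a Gelfand--Hille theorem formulated directly for the differences $A^{n+1}-A^n$, which yields $(A-I)^{r+4}=0$ in one stroke. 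You instead telescope the increments in both directions, $A^{\pm n}-I=\pm\sum_{k=1}^{n}A^{\pm(k-1)\mp 0}\cdot(\cdots)$ as you wrote, to obtain $\|A^n\|=O(|n|^{r+3})$ for $n\in\mathbb{Z}$, and then apply the classical Gelfand--Hille theorem as stated in the introduction (with $\|A^n\|=O(|n|^{(r+4)-1})$ giving $(A-I)^{r+4}=0$). The two routes land on the same exponent $r+4$; yours trades the external citation for an elementary summation and is therefore more self-contained, which in effect reproves the relevant special case of Zem\'{a}nek's difference version. The only cosmetic slip is the phrase ``multiplication on the right by $(I+A)^{-1}$'' --- since $A^n(I-A^2)=-(I+A)A^nN$, the factor is peeled off on the left --- but as everything in sight commutes this does not affect the estimate.
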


\begin{proof}
By Lemma \ref{L:crucial lemma}, There exists $M>0$, such that
\[\|A^n(I-A^2)\|\leq M|n|^{r+2},~~\forall n\in \mathbb{Z}\setminus\{0\}.\]
Equivalently,
\[\|A^n(A-I)(A+I)\|\leq M|n|^{r+2},~~\forall n\in \mathbb{Z}\setminus\{0\}.\]
Since $\sigma(A+I)=\{2\}$, it follows that
\[\|A^{n+1}-A^n\|\leq M\|(A+I)^{-1}\||n|^{r+2},\forall n\in \mathbb{Z}\setminus\{0\}.\]
By Theorem 4 of \cite{Zem94},
\[(A-I)^{r+4}=0.\]
\end{proof}

Finally, we are ready to give the proof of Theorem \ref{B}.

\begin{proof}[\bf Proof of Theorem B]
Set $A=T+K+I$, then $\sigma(A)=\{1\}$.
Now
\[\|A^{-n}+\frac{1}{\lambda} A^{n}\|=\frac{1}{\lambda}\|A^n+\lambda A^{-n}\|\sim O(n^r),~~\forall n\in \bN,\]
By Theorem \ref{NEW Gelfand-Hille},
\[(A-I)^{r+4}=0.\]
In other words,
\[(T+K)^{r+4}=0.\]
Hence, $T$ is a polynomially compact operator. Then according to \cite[Corollary 5.6]{RR2}, $T$ is intransitive.
\end{proof}

\section{Another equivalent of the invariant subspace problem}

In this section we shall be applying the properties of the shift representation operators, Proposition \ref{Rosenblum 1} and Proposition \ref{two conditions make A intransitive} to
prove Theorem \ref{C}, which is another equivalent of the Invariant Subspace Problem via the injectivity of certain Hankel operators.

\begin{defn}
Let $P$ denote the orthogonal projection from $L^2(\mathbb{T})$ onto $H^2$. For $\varphi=\sum_{n=-\infty}^{\infty}c_nz^n\in L^\infty(\mathbb{T})$,
the Toeplitz operator $T_\varphi$ on $H^2$ with symbol $\varphi$ is defined as
\[T_\varphi g=P(\varphi g),~~\forall g\in H^2,\]
the Hankel operator $H_\varphi$ from $H^2$ to $H^2$ with symbol $\varphi$ is defined as
\[H_\varphi g=P(J(\varphi g)),~~\forall g\in H^2,\]
where $J\in B(L^2(\mathbb{T}))$ is defined by
\[Jf(z)=f(\overline{z}),~~\forall f\in L^2(\mathbb{T}).\]
It is easy to check that $J$ is a symmetry, i.e. $J=J^*=J^{-1}$.
\end{defn}

The following theorem is derived from Peller's monograph \cite{Pel}.

\begin{thm}\cite[Theorem 2.3]{Pel}\label{injectivity of Hankel operator}
Let $\psi\in L^\infty(\mathbb{T})$. Then the following are equivalent:
\begin{enumerate}
\item[(i)] $H_\psi$ has nontrivial kernel;
\item[(ii)] $\ran H_\psi$ is not dense in $H^2$;
\item[(iii)] $\psi=\overline{\vartheta}\phi$ for some inner function $\vartheta$
 and some function $\phi\in H^\infty$.
 \end{enumerate}
\end{thm}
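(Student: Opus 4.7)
The plan is to rewrite the kernel condition on $H_\psi$ in terms of the shift representation operator $K_f=\sum_{n=0}^{\infty}A^n f\otimes e_n$ from Proposition~\ref{Rosenblum 1}. Because $r(A)<1$, the sequence $\{\|A^n\|\}_{n\geq 0}$ is dominated by a geometric sequence, so it lies in $\ell^2$; thus Proposition~\ref{Rosenblum 1} applies with $\beta_n\equiv 1$, and in particular $K_f\in K(H^2)\cap\ker\tau_{A,S}$ whenever $f\in H^2$, with $K_f\neq 0$ as soon as $f\neq 0$. The same geometric decay of $\|A^n\|$ forces the Fourier coefficients of $\psi$ to be in $\ell^1$, so $\psi\in L^\infty(\mathbb{T})$ and $H_\psi$ is a bounded operator on $H^2$.

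For the easy direction, I would assume $A$ is intransitive and pick a nontrivial invariant subspace $\M$. Any nonzero $f\in\M$ and nonzero $g\in\M^\perp$ yield $\langle A^nf,g\rangle=0$ for every $n\geq 0$, so $\psi\equiv 0$ and $H_\psi=0$ trivially fails to be injective.

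For the substantive direction, suppose $f,g\in H^2$ are nonzero and that there is a nonzero $h=\sum_{m\geq 0}h_me_m\in\ker H_\psi$. Unwinding the definitions of $J$ and $P$ will show that the $k$-th Fourier coefficient of $H_\psi h$ equals $\sum_{m\geq 0}h_m\langle A^{k+m}f,g\rangle$. Since $r(A)<1$, the series $F\doteq \sum_{m\geq 0}h_mA^mf=K_fh$ converges absolutely in $H^2$, and the above coefficient then collapses to $\langle A^kF,g\rangle$. So $H_\psi h=0$ is equivalent to $\langle A^kF,g\rangle=0$ for every $k\geq 0$. At this point I would split into two cases. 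If $F=0$, then the nonzero vector $h$ lies in $\ker K_f$, so $0\in\sigma_p(K_f)$, and Proposition~\ref{two conditions make A intransitive}(1) produces an eigenvalue of $A$ and hence a nontrivial invariant subspace. If $F\neq 0$, then $\bigvee_{k\geq 0}\{A^kF\}$ is a closed $A$-invariant subspace that contains the nonzero vector $F$ and is contained in $\{g\}^\perp\neq H^2$, hence is nontrivial.

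I do not expect a real obstacle: the whole argument rests on the single identity identifying the $k$-th Fourier coefficient of $H_\psi h$ with $\langle A^kK_fh,g\rangle$, after which either $\ker K_f$ (via Proposition~\ref{two conditions make A intransitive}(1)) or the cyclic subspace generated by $K_fh$ supplies the invariant subspace. The only mildly delicate point is keeping the degenerate case $F=K_fh=0$ separate from the generic case $F\neq 0$, so as to avoid spuriously using the zero vector to generate an invariant subspace.
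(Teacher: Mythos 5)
Your proposal does not prove the statement in question. The theorem you were asked about is a general fact about Hankel operators: for an \emph{arbitrary} symbol $\psi\in L^\infty(\mathbb{T})$, the conditions (i) $\ker H_\psi\neq\{0\}$, (ii) $\overline{\ran H_\psi}\neq H^2$, and (iii) $\psi=\overline{\vartheta}\phi$ with $\vartheta$ inner and $\phi\in H^\infty$ are equivalent. There is no operator $A$, no spectral radius hypothesis, no pair of vectors $f,g$, and no invariant subspace in that statement. Your entire argument --- the shift representation operator $K_f$, the identification of the Fourier coefficients of $H_\psi h$ with $\langle A^k K_f h, g\rangle$, the case split on $F=K_fh$, the appeal to Proposition~\ref{two conditions make A intransitive} --- is aimed at Theorem~\ref{C} (the equivalence of intransitivity of $A$ with non-injectivity of the particular Hankel operator whose symbol is $\sum_n\langle A^nf,g\rangle z^{-n}$), which is a different result and one that in the paper \emph{uses} the present theorem (and Beurling's theorem) as an ingredient. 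Nothing in your write-up engages with a general $L^\infty$ symbol, and in particular nothing addresses the equivalence with the factorization (iii), which is the substantive content here.

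For the record, the paper does not prove this theorem at all: it is quoted from Peller's monograph \cite[Theorem 2.3]{Pel}. If you did want to prove it, the route is entirely different from what you wrote: the intertwining relation $H_\psi S=S^*H_\psi$ shows that $\ker H_\psi$ is $S$-invariant and $\overline{\ran H_\psi}$ is $S^*$-invariant; Beurling's theorem (Theorem~\ref{Beurling}) then identifies a nontrivial kernel with a subspace $\vartheta H^2$ for some inner $\vartheta$, and unwinding $H_\psi\vartheta=0$ via the definitions of $J$ and $P$ yields exactly the factorization $\psi=\overline{\vartheta}\phi$ with $\phi\in H^\infty$ (and conversely); the equivalence of (i) and (ii) follows from the same intertwining relation applied to $H_\psi^*$, or from the fact that $(\overline{\ran H_\psi})^\perp=\ker H_\psi^*$ together with the symmetry between $H_\psi$ and its adjoint. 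You should either supply an argument along those lines or, as the authors do, simply cite the result.
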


\begin{prop}\label{alpha}
Let $S$ be the unilateral shift operator on $H^2$ and $A\in B(H^2)$ with $r(A)<1$, $f$ be a unit vector of $H^2$. Suppose that there exists a nonzero operator $G\in \{S^*\}'$,
such that
$\alpha\in \ker G$,
where
\[\alpha=\sum_{n=0}^\infty
\overline{\langle A^nf,e_0\rangle}z^n.\]
Then $A$ is intransitive.
\end{prop}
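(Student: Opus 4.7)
The plan is to recognize the vector $\alpha$ as $K_f^{*}e_0$, where $K_f = \sum_{n=0}^{\infty} A^n f \otimes e_n$ is the shift representation operator supplied by Proposition \ref{Rosenblum 1}. Since $r(A)<1$, Gelfand's formula gives $\{\|A^n\|\}\in\ell^2$, so Proposition \ref{Rosenblum 1} applies with $\beta_n\equiv 1$, producing a nonzero compact element $K_f\in\ker\tau_{A,S}$. Using $(x\otimes y)h=\langle h,y\rangle x$, a quick computation yields $K_f^{*}e_0 = \sum_{n=0}^{\infty}\overline{\langle A^nf,e_0\rangle}e_n$, which is exactly the vector represented by the power series $\alpha$.

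The next move is to transport the hypothesis $G\alpha = 0$ into a statement about $\ker\tau_{A,S}$. Taking adjoints of $GS^{*}=S^{*}G$ gives $G^{*}\in\{S\}'$, so Proposition \ref{bimodule} produces
\[
K_f G^{*} \in \ker\tau_{A,S}.
\]
Moreover $(K_f G^{*})^{*}e_0 = GK_f^{*}e_0 = G\alpha = 0$, so $e_0\perp \ran(K_fG^{*})$.

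The conclusion then follows by case analysis, each case feeding into Proposition \ref{two conditions make A intransitive}. If $K_f G^{*}\neq 0$, the orthogonality displayed above shows $\overline{\ran(K_f G^{*})}\neq H^2$, and part (2) of that proposition gives the intransitivity of $A$. If instead $K_f G^{*}=0$, then since $G\neq 0$ (hence $G^{*}\neq 0$) we can pick a nonzero vector in $\ran(G^{*})\subseteq\ker K_f$, so $0\in\sigma_p(K_f)$; part (1) then produces an eigenvalue of $A$, and a corresponding eigenvector spans a nontrivial invariant subspace since $\dim H^2=\infty$.

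The only subtle point is the case split: one cannot discard the possibility $K_fG^{*}=0$ a priori, since both $K_f$ and $G^{*}$ are nonzero but their product might vanish. This is not really an obstacle, as both alternatives plug straight into Proposition \ref{two conditions make A intransitive}. The conceptual insight that makes everything short is the identification $\alpha = K_f^{*}e_0$: once this is seen, the hypothesis automatically becomes a statement about the shift representation operator $K_f$ composed on the right with the commutant element $G^{*}$.
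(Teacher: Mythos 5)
Your proof is correct and follows essentially the same route as the paper: identify $\alpha$ as $K_f^*e_0$, use Proposition \ref{bimodule} to place $K_fG^*$ in $\ker\tau_{A,S}$, and feed the two alternatives into Proposition \ref{two conditions make A intransitive}. Your case split on whether $K_fG^*$ vanishes (rather than the paper's split on whether $\ker K_f$ is trivial, preceded by a separate treatment of $\alpha=0$) is a minor reorganization that in fact handles the $\alpha=0$ case uniformly.
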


\begin{proof}
If $\alpha=0$, then either $A^Nf=0$ for some $N\in \bN$ or $e_0\perp A^nf$, $A^nf\neq 0$, $\forall n\geq 0$.
In each case, $A$ is intransitive. Hence, we may assume that $\alpha\neq 0$.

By Proposition \ref{Rosenblum 1},
\[K_f\doteq \sum_{n=0}^\infty A^nf\otimes e_n\in \ker\tau_{A,S}.\] Note that
\[K_f^*=\sum_{n=0}^\infty e_n\otimes A^nf,\] and hence with respect to $\{e_n\}_{n=0}^\infty$,
\[K_f^*=\begin{matrix}
\begin{bmatrix}
    \overline{\langle f,e_0\rangle}&\overline{\langle f,e_1\rangle}&\cdots&\overline{\langle f,e_k\rangle}&\cdots\\
    \overline{\langle Af,e_0\rangle}& \overline{\langle Af,e_1\rangle}&\cdots&\overline{\langle Af,e_k\rangle}&\cdots\\
\vdots&\vdots&\ddots&\vdots&\ddots\\
\overline{\langle A^kf,e_0\rangle}& \overline{\langle A^kf,e_1\rangle}&\cdots&\overline{\langle A^kf,e_k\rangle}&\cdots\\
\vdots&\vdots&\ddots&\vdots&\ddots\\
\end{bmatrix}&\begin{matrix}
  e_0\\
  e_1\\
  \vdots\\
e_k\\
  \vdots\\
\end{matrix}
\end{matrix}.\]
Since $G\alpha=0$, with respect to $\{e_n\}_{n=0}^\infty$,
\[GK_f^*=\begin{matrix}
\begin{bmatrix}
    0&\ast&\cdots&\ast&\cdots\\
    0&\ast&\cdots&\ast&\cdots\\
\vdots&\vdots&\ddots&\vdots&\ddots\\
0&\ast&\cdots&\ast&\cdots\\
\vdots&\vdots&\ddots&\vdots&\ddots\\
\end{bmatrix}&\begin{matrix}
  e_0\\
  e_1\\
  \vdots\\
e_k\\
  \vdots\\
\end{matrix}
\end{matrix};\]
therefore,
\[K_fG^*=\begin{matrix}
\begin{bmatrix}
    0&0&\cdots&0&\cdots\\
    \ast&\ast&\cdots&\ast&\cdots\\
\vdots&\vdots&\ddots&\vdots&\ddots\\
\ast&\ast&\cdots&\ast&\cdots\\
\vdots&\vdots&\ddots&\vdots&\ddots\\
\end{bmatrix}&\begin{matrix}
  e_0\\
  e_1\\
  \vdots\\
e_k\\
  \vdots\\
\end{matrix}
\end{matrix}.\]
Since $G\in \{S^*\}'$, we have $G^*\in \{S\}'$.
By Proposition \ref{bimodule},
$K_fG^*\in \ker \tau_{A,S}$.

Since $\alpha\neq 0$, $K_f\neq 0$. If $\ker K_f\neq \{0\}$, by Proposition \ref{two conditions make A intransitive} (1), $A$ is intransitive.
If $\ker K_f= \{0\}$, since $G^*\neq 0$, $K_fG^*\neq 0$. By observing the matrix representation of $K_fG^*$,
one could deduce that $\overline{\ran K_fG^*}\neq H^2$. Now by Proposition \ref{two conditions make A intransitive} (2), $A$ is intransitive.
\end{proof}

\begin{prop}
Let $f,g\in H^2$ and $A\in B(H^2)$ with $r(A)<1$.
Then $\psi=\sum_{n=0}^\infty \langle A^nf,g\rangle z^{-n}\in L^\infty(\mathbb{T})$ and $H_\psi$ is a Hilbert-Schmidt operator.
\end{prop}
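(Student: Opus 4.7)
The plan is to exploit the condition $r(A)<1$ to produce geometric decay of the Fourier coefficients of $\psi$, which makes both conclusions nearly automatic.

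First I would fix $\rho$ with $r(A)<\rho<1$. By the spectral radius formula $\|A^n\|^{1/n}\to r(A)$, so there exists $C>0$ with $\|A^n\|\leq C\rho^n$ for all $n\geq 0$. Consequently, writing $c_n\doteq \langle A^n f,g\rangle$, the Cauchy--Schwarz inequality gives
\[
|c_n|\leq \|A^n\|\,\|f\|\,\|g\|\leq C\rho^n\|f\|\,\|g\|,\qquad n\geq 0.
\]
Since $\sum_{n\geq 0}|c_n|<\infty$, the series $\psi(z)=\sum_{n\geq 0}c_n z^{-n}$ converges absolutely and uniformly on $\mathbb{T}$; in particular $\psi\in C(\mathbb{T})\subset L^{\infty}(\mathbb{T})$ with $\|\psi\|_\infty\leq C\|f\|\|g\|/(1-\rho)$.

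For the Hilbert--Schmidt assertion I would compute the matrix of $H_\psi$ with respect to $\{e_n\}_{n=0}^\infty$. For $j\geq 0$, $\psi e_j=\sum_{n\geq 0}c_n z^{j-n}$, so $J(\psi e_j)(z)=\sum_{n\geq 0}c_n z^{n-j}$, and applying $P$ keeps only the terms with $n\geq j$. Hence $\langle H_\psi e_j,e_i\rangle =c_{i+j}$, and the matrix of $H_\psi$ is the classical Hankel pattern $(c_{i+j})_{i,j\geq 0}$. The Hilbert--Schmidt norm squared is therefore
\[
\|H_\psi\|_{HS}^2=\sum_{i,j\geq 0}|c_{i+j}|^2=\sum_{k\geq 0}(k+1)|c_k|^2\leq C^2\|f\|^2\|g\|^2\sum_{k\geq 0}(k+1)\rho^{2k}<\infty,
\]
so $H_\psi$ is Hilbert--Schmidt.

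There is no real obstacle here: both claims are direct consequences of the geometric decay estimate. The only point to be slightly careful about is the sign/indexing conventions for the Hankel operator as defined in the paper (via $P\circ J\circ M_\psi$), which I would verify once at the start so that the matrix identification $\langle H_\psi e_j,e_i\rangle=c_{i+j}$ is justified; after that the $(k+1)$-weighted geometric sum finishes the proof.
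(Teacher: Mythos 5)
Your proposal is correct and follows essentially the same route as the paper: geometric decay of $\|A^n\|$ from the spectral radius formula gives $\psi\in L^\infty(\mathbb{T})$, and the Hankel matrix identity $\sum_{i,j}|\langle H_\psi e_j,e_i\rangle|^2=\sum_k(k+1)|c_k|^2$ combined with that decay gives the Hilbert--Schmidt bound. The only cosmetic difference is that the paper finishes the last sum with the root test applied to $(n+1)\|A^n\|^2$ rather than an explicit geometric majorant, which changes nothing of substance.
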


\begin{proof}
Since $r(A)<1$, there exists a positive number $\delta>0$ such that $r(A)<1-\delta$. Then by the spectral radius formula, we could find a positive integer
$N$ such that $\|A^n\|\leq (1-\delta)^n$ for each $n\geq N$. Then
\[\psi=\sum_{n=0}^\infty \langle A^nf,g\rangle z^{-n}\in L^\infty(\mathbb{T}),\]
accordingly $H_\psi$ is a Hankel operator.

Note that
\[\sum_{m,n=0}^{\infty}|\langle H_\psi e_n,e_m \rangle|^2=\sum_{n=0}^\infty (n+1)|\langle A^nf,g\rangle|^2\leq \|f\|^2\|g\|^2 (\sum_{n=0}^\infty (n+1)\|A^n\|^2).\]
Since $r(A)<1$,  we have
\[\underset{n\to \infty}{\lim}[(n+1)\|A^n\|^2]^{\frac{1}{n}}=r(A)^2<1.\]
Hence, $H_\psi$ is a Hilbert-Schmidt operator.
\end{proof}

Now, we are at the position to obtain Theorem \ref{C}.

\begin{proof}[\bf Proof of Theorem C]
The necessary part is easy. In fact, since $A$ is intransitive, we could pick a nontrivial invariant subspace $\M$ of $A$.
Let $f\in \M$, $g\in \M^\perp$ be unit vectors. Now $\langle A^nf,g\rangle=0$, $\forall n\geq 0$.
Hence, $H_\psi=0$. In particular, $H_\psi$ is not injective.

Now we focus on the sufficient part. With no loss of generality, we may assume that $A$ is injective and $f,g$ are unit vectors.
It will be convenient to consider the following two cases separately.
\begin{enumerate}

\item[\textsc{Case}] \textsc{1.}
If $H_\psi=0$, then $g\perp \bigvee_{n=0}^\infty\{A^nf\}\doteq \M$. Since $A$ is injective, $\M$ is a nontrivial invariant subspace of
$A$. Therefore, $A$ is intransitive.

\item[\textsc{Case}] \textsc{2.}
If $\{0\}\subsetneq\ker H_\psi\subsetneq H^2$.
Let $U\in B(H^2)$ be a unitary operator such that $Ue_0=g$.
Now
\[\psi=\sum_{n=0}^{\infty}\langle A^n f,g\rangle z^{-n}=\sum_{n=0}^{\infty}\langle U^*A^n U(U^*)f,e_0\rangle z^{-n}=
\sum_{n=0}^{\infty}\langle(U^*AU)^n (U^*f),e_0\rangle z^{-n}.\]
Thus, by replacing $A$ and $f,g$ by $U^*AU$ and $U^*f, U^*g$, we
may assume without loss of generality that $g=e_0$.

We claim that there exists an inner function $\vartheta$,
such that $H_\psi\vartheta=0$. As $H_\psi$ is a Hankel operator, by using the fact that $S^*H_\psi=H_\psi S$, one could obtain that $\{0\}\subsetneq\ker H_\psi\in \textup{Lat}S$. By Theorem \ref{Beurling}, there exists an inner function $\vartheta=\sum_{n=0}^\infty b_ne_n$,
such that $\ker H_\psi=\vartheta H^2$. In particular, $H_\psi\vartheta=0$. That is,
\[
\begin{matrix}\begin{bmatrix}
    \langle f,e_0\rangle &\langle Af,e_0\rangle&\langle A^2f,e_0\rangle&\langle A^3f,e_0\rangle&\cdots\\
     \langle Af,e_0\rangle &\langle A^2f,e_0\rangle&\langle A^3f,e_0\rangle&&\ddots\\
\langle A^2f,e_0\rangle &\langle A^3f,e_0\rangle&&\ddots&\\
\langle A^3f,e_0\rangle &&\ddots&&\\
\vdots&\ddots&&&\\
\end{bmatrix}&\begin{bmatrix}
  b_0\\
  b_1\\
  b_2\\
 \vdots\\
  \vdots\\
\end{bmatrix}\end{matrix}=\begin{bmatrix}
 0\\
  0\\
  0\\
\vdots\\
  \vdots\\
\end{bmatrix}.\]
Therefore,
\[\sum_{n=0}^\infty\langle A^{l+n}f,e_0\rangle b_n=0,~~\forall l\geq 0.\]
Thus,
\[\sum_{n=0}^\infty \overline{b_n}\cdot\overline{\langle A^{l+n}f,e_0\rangle}=0,~~\forall l\geq 0.\]
Consequently,
\[
\begin{bmatrix}
    \overline{b_0}&\overline{b_1}&\overline{b_2}&\overline{b_3}&\cdots\\
  &\overline{b_0}&\overline{b_1}&\overline{b_2}&\ddots\\
 &&\overline{b_0}&\overline{b_1}&\ddots\\
 &&&\overline{b_0}&\ddots\\
&&&&\ddots\\
\end{bmatrix}\begin{bmatrix}
\overline{\langle f,e_0\rangle}\\
\overline{\langle Af,e_0\rangle}\\
\overline{\langle A^2f,e_0\rangle}\\
\overline{\langle A^3f,e_0\rangle}\\
\vdots\\
\end{bmatrix}=0.\]
Therefore, $T_\vartheta^*\alpha=0$, where
\[\alpha=\sum_{n=0}^\infty \overline{\langle A^nf,e_0\rangle}e_n.\] Note that $T_\vartheta^*\in \{S^*\}'$ is a co-isometry, and $\alpha\neq 0$ as $H_\psi\neq 0$.
By Proposition \ref{alpha},
$A$ is intransitive.

\end{enumerate}

\end{proof}

\begin{eg}
Let $A\in B(H^2)$ be an algebraic operator. Since $A$ is intransitive if and only if $\frac{A}{\|A\|+1}$ is intransitive. We may assume $r(A)<1$ with no loss of generality. Then for any vectors $f,g\in H^2$, $H_\psi$ is not injective, where $\psi=\sum_{n=0}^{\infty}\langle A^n f,g\rangle z^{-n}$.

In fact, since $A$ is an algebraic operator, there exists $N\in \bN$, and $(\alpha_0,\alpha_1,\cdots,\alpha_N)\in \bC^N\setminus\{0\}$, such that $\sum_{n=0}^N \alpha_n A^n=0$. Note that
\[\begin{bmatrix}
    \langle f,g\rangle &\langle Af,g\rangle&\langle A^2f,g\rangle&\langle A^3f,g\rangle&\cdots\\
     \langle Af,g\rangle &\langle A^2f,g\rangle&\langle A^3f,g\rangle&&\ddots\\
\langle A^2f,g\rangle &\langle A^3f,g\rangle&&\ddots&\\
\langle A^3f,g\rangle &&\ddots&&\\
\vdots&\ddots&&&\\
\end{bmatrix}\begin{bmatrix}a_0\\
a_1\\
\vdots\\
a_N\\
0\\
\vdots\\
\end{bmatrix}=\begin{bmatrix}
\langle \sum_{n=0}^N \alpha_nA^nf,g\rangle\\
\langle A(\sum_{n=0}^N \alpha_nA^nf),g\rangle\\
\vdots\\
\langle A^k(\sum_{n=0}^N \alpha_nA^nf),g\rangle\\
\vdots\\
\vdots\\
\end{bmatrix}=0.\]
Hence $H_{\psi}\alpha=0$,
where $\alpha=\sum_{n=0}^N \alpha_n e_n$.
Therefore, by Theorem \ref{C}, $A$ is intransitive.

\end{eg}

We now give two applications of Theorem \ref{C}.

\begin{cor}
Let $A\in B(H^2)$ with $r(A)<1$.
Then $A$ is intransitive if one of the following conditions holds.
\begin{enumerate}
\item there are two nonzero vectors $f,g\in H^2$ and a constant $\alpha\in \bC$, $k\in \bN$
such that $\langle A^nf,g\rangle=\alpha \langle A^{n+1}f,g\rangle$, $\forall n\geq k$.
\item there are two nonzero vectors $f,g\in H^2$, such that
the $|\psi|$ is a constant $c$, where $\psi=\sum_{n=0}^{\infty}\langle A^n f,g\rangle z^{-n}$.
\end{enumerate}
\end{cor}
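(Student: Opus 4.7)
My plan is to apply Theorem \ref{C} in each case by directly exhibiting a nonzero element of $\ker H_\psi$. The assumption $r(A)<1$, together with the proposition preceding Theorem \ref{C}, already guarantees $\psi\in L^\infty(\mathbb{T})$, so the Hankel operator $H_\psi$ is defined.

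For part (1), I would set $a_n:=\langle A^n f,g\rangle$, so that $H_\psi$ has matrix entries $[a_{i+j}]_{i,j\geq 0}$ in the canonical basis $\{e_n\}_{n\geq 0}$. The recurrence $a_n-\alpha a_{n+1}=0$ for all $n\geq k$ naturally suggests the kernel candidate
\[
h := e_k - \alpha e_{k+1} \in H^2.
\]
A direct computation yields $(H_\psi h)_i = a_{i+k}-\alpha a_{i+k+1}=0$ for each $i\geq 0$, since $i+k\geq k$ triggers the hypothesis. As $h\neq 0$ (its $z^k$-coefficient is $1$), this produces a nontrivial element of $\ker H_\psi$, and Theorem \ref{C} then delivers the intransitivity of $A$.

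For part (2), if $c=0$ then $\psi=0$ and $H_\psi=0$ is trivially not injective on the infinite-dimensional space $H^2$. When $c>0$, the key observation is that $\psi=\sum_{n\geq 0}a_n z^{-n}$ belongs to $\overline{H^\infty}$, so $\overline{\psi}\in H^\infty$ is analytic with constant modulus $c$ on $\mathbb{T}$. Hence $\vartheta:=\overline{\psi}/c$ is an inner function, and one rewrites
\[
\psi \;=\; c\,\overline{\vartheta} \;=\; \overline{\vartheta}\cdot\phi, \qquad \phi:=c\in H^\infty,
\]
which is precisely the factorization (iii) of Theorem \ref{injectivity of Hankel operator}. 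That theorem then guarantees $\ker H_\psi\neq\{0\}$, and a second application of Theorem \ref{C} concludes.

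I do not anticipate any substantive obstacle: once the kernel vector $e_k-\alpha e_{k+1}$ is spotted in (1) and the factorization $\psi=\overline{\vartheta}\phi$ is recognized in (2), both arguments reduce to routine verifications via Theorem \ref{C} and, for (2), Peller's characterization of non-injective Hankel operators.
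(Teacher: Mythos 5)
Your proof is correct and follows essentially the same route as the paper: both parts reduce to producing a nonzero element of $\ker H_\psi$ and invoking Theorem \ref{C} (together with Theorem \ref{injectivity of Hankel operator} for part (2), where your factorization $\psi=\overline{\vartheta}\cdot c$ is exactly the paper's). The only cosmetic difference is in part (1), where you exhibit the explicit kernel vector $e_k-\alpha e_{k+1}$ while the paper simply observes that the recurrence forces $H_\psi$ to be of finite rank.
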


\begin{proof}

(1). By assumption, $H_\psi$ is a finite rank Hankel operator, and hence $\ker H_\psi\neq \{0\}$, where $\psi=\sum_{n=0}^\infty \langle A^nf,g\rangle z^{-n}$.
Therefore, by Theorem \ref{C}, $A$ is intransitive.

(2). If $c=0$, then $H_\psi=0$, hence $A$ is intransitive by Theorem \ref{C}.
Thus, we assume that $c\neq 0$. Then
\[\psi(z)=\overline{\theta(z)}\cdot c,\]
where $\theta(z)=\frac{\overline{\psi(z)}}{c}$.
Note that $\theta(z)$ is an inner function.
By Theorem \ref{injectivity of Hankel operator}, $\ker H_\psi\neq \{0\}$. So $A$ is intransitive by Theorem \ref{C}.
\end{proof}

\begin{cor}
Let $A\in B(H^2)$ with $r(A)<1$. Then $A$ is intransitive if there are two nonzero vectors $f,g\in H^2$, a nonnegative integer $k$, such that
the Hankel operator $H_\gamma$ is not injective, where $\gamma=\sum_{n=0}^{\infty}\langle A^{n+k} f,g\rangle z^{-n}$.
\end{cor}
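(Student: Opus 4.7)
The plan is to reduce the statement to a direct application of Theorem~\ref{C}. The key observation is that shifting the index in the symbol $\gamma$ corresponds to absorbing the power $A^k$ into the vector $f$: setting $f'\doteq A^kf$, one has
\[\gamma = \sum_{n=0}^{\infty}\langle A^{n+k}f,g\rangle z^{-n}= \sum_{n=0}^{\infty}\langle A^{n}f',g\rangle z^{-n},\]
so $H_\gamma$ is precisely the Hankel operator associated with the pair $(f',g)$ in the sense of Theorem~\ref{C}. Provided $f'\neq 0$, the hypothesis that $H_\gamma$ is not injective together with Theorem~\ref{C} immediately yields that $A$ is intransitive.

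The only place where this direct reduction can fail is when $A^kf=0$, which must be handled separately. In this case, since $f\neq 0$, there is a least integer $j\in\{1,\ldots,k\}$ with $A^jf=0$, and then $x\doteq A^{j-1}f$ is a nonzero element of $\ker A$. Hence $\{0\}\subsetneq\ker A$, and $\ker A$ is trivially invariant under $A$. If $\ker A=H^2$, then $A=0$ and every closed subspace of $H^2$ is invariant, so $A$ is intransitive; otherwise $\ker A$ is itself a proper nontrivial invariant subspace of $A$. Either way, $A$ is intransitive.

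No serious obstacle is anticipated: the proof is essentially a bookkeeping reduction to Theorem~\ref{C}, and the degenerate case is resolved by the elementary observation that $\ker A$ is invariant under $A$. I would write the proof in just a few lines, spending one line to exhibit the substitution $f'=A^kf$, and one line to dispose of the case $A^kf=0$ via the invariance of $\ker A$.
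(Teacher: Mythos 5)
Your proof is correct and follows essentially the same route as the paper: the core step in both is the substitution $f_1=A^kf$, which identifies $H_\gamma$ with the Hankel operator of the pair $(A^kf,g)$ so that Theorem~\ref{C} applies directly. The only divergence is in the degenerate case $A^kf=0$, where the paper observes that the unshifted symbol $\psi=\sum_{n=0}^{\infty}\langle A^nf,g\rangle z^{-n}$ gives a finite-rank (hence non-injective) Hankel operator and invokes Theorem~\ref{C} once more, whereas you note directly that $\ker A\neq\{0\}$ forces intransitivity; both one-line arguments are valid.
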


\begin{proof}
If $A^kf=0$, then $H_\psi$ is a finite rank operator, where $\psi=\sum_{n=0}^{\infty}\langle A^{n} f,g\rangle z^{-n}$.
Hence, by Theorem \ref{C}, $A$ is intransitive.

If $A^kf\neq 0$, then set $f_1=A^kf$. Now $H_{\psi_1}=H_\gamma$ is not injective, where
\[\psi_1=\sum_{n=0}^{\infty}\langle A^{n} f_1,g\rangle z^{-n}.\] By Theorem \ref{C} again, $A$ is intransitive.

\end{proof}

\section{Cyclic vectors}

We first need an elegant result about the cyclic vectors of $S^*$, due to Douglas, Shapiro and Shields \cite{DSS}.

\begin{thm}\cite[Theorem 2.2.4]{DSS}\label{DSS}
If $f$ is holomorphic in $|z|<R$ for some
$R>1$, then $f$ is either a cyclic vector of $S^*$ or a rational function $($and hence non-cyclic$)$.
\end{thm}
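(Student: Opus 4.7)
The plan is to prove the contrapositive of the nontrivial direction: assuming $f \in H^2$ is not cyclic for $S^*$, I will produce a meromorphic extension of $f$ to the Riemann sphere $\overline{\bC}$, forcing $f$ to be rational. The converse, that nonzero rational functions in $H^2$ are noncyclic, is immediate because partial fractions realize them inside a finite-dimensional $S^*$-invariant subspace spanned by the reproducing-kernel-type functions $(1-\bar\lambda z)^{-k}$. The hook into the machinery developed in this paper is the characterization of non-injective Hankel operators in Theorem~\ref{injectivity of Hankel operator}.

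First I will translate non-cyclicity into a Hankel statement. Non-cyclicity produces a nonzero $g = \sum g_n z^n \in H^2$ with $\langle S^{*n}f, g\rangle = 0$ for all $n\ge 0$; written in Fourier coefficients $f=\sum f_n z^n$, this becomes the system $\sum_{j\ge 0}\overline{f_{j+n}}\,g_j = 0$ for every $n\ge 0$. Comparing with the matrix $(H_\psi)_{mn}=c_{-m-n}$ of the Hankel operator from Section~3, the condition is exactly $H_{\bar f}g=0$, where $\bar f\in L^\infty(\bT)$ since the hypothesis $R>1$ makes $f$ bounded on $\bT$. Theorem~\ref{injectivity of Hankel operator} then furnishes an inner function $\vartheta$ and a function $\phi\in H^\infty$ with $\bar f = \bar\vartheta\,\phi$ on $\bT$, i.e.\ $f = \vartheta\,\bar\phi$ on $\bT$. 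Passing to Schwarz reflections $\tilde\vartheta(z)=\overline{\vartheta(1/\bar z)}$ and $\tilde\phi(z)=\overline{\phi(1/\bar z)}$, which are bounded holomorphic on $\{|z|>1\}$ with boundary values $\bar\vartheta,\bar\phi$ respectively, and using that $|\vartheta|=1$ on $\bT$ makes $\tilde\vartheta$ nonvanishing there, the ratio $G := \tilde\phi/\tilde\vartheta$ is meromorphic of bounded type on $\{|z|>1\}$ with boundary values $\vartheta\bar\phi = f$ on $\bT$. In short, $G$ is a pseudo-continuation of $f$ across the unit circle.

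It remains to glue $f$ and $G$ across $\bT$. Both are defined on the annulus $\{1<|z|<R\}$, and their boundary values on $\bT$ agree. Consider the auxiliary function $h := \tilde\vartheta f - \tilde\phi$, holomorphic on $\{1<|z|<R\}$ and bounded on $\{1<|z|<r\}$ for every $r<R$ (because $f$ is bounded on the closed sub-annulus by analyticity on $|z|<R$, while $\tilde\vartheta,\tilde\phi$ are bounded on all of $|z|>1$); its nontangential boundary values on $\bT$ are $\bar\vartheta f - \bar\phi = \bar\vartheta(\vartheta\bar\phi)-\bar\phi = 0$. Expanding $h$ as a Laurent series on the annulus and applying bounded convergence to the Fourier coefficient integrals along circles $|z|=\rho$ as $\rho\to 1^+$ forces every Laurent coefficient to vanish, hence $h\equiv 0$ and $G=f$ on the annulus (zeros of $\tilde\vartheta$ become removable singularities of $G$). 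The pair $(f,G)$ then defines a single meromorphic function on $\overline{\bC}$, which is necessarily rational, so $f$ is rational. The main obstacle is precisely this final boundary-uniqueness step: one must justify that a bounded holomorphic function on the annulus whose nontangential limits on the inner circle vanish is identically zero. The Laurent/bounded convergence argument sketched above suffices; alternatively one can invoke Privalov's uniqueness theorem for functions of bounded type to carry out the pseudo-continuation gluing in one blow.
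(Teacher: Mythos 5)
The paper offers no proof of this statement at all --- it is imported verbatim as \cite[Theorem 2.2.4]{DSS} --- so there is no internal argument to compare against. Your proof is correct, and it is essentially the original Douglas--Shapiro--Shields pseudocontinuation argument, recast through the Hankel-kernel criterion (Theorem \ref{injectivity of Hankel operator}) that this paper also quotes: non-cyclicity of $f$ gives $H_{\bar f}g=0$ for some $g\neq 0$, hence $\bar f=\bar\vartheta\phi$ with $\vartheta$ inner and $\phi\in H^\infty$, hence the bounded-type pseudocontinuation $G=\tilde\phi/\tilde\vartheta$ of $f$ to $|z|>1$; the hypothesis $R>1$ then upgrades the pseudocontinuation to a genuine continuation via your boundary-uniqueness step on the annulus, so $f$ is meromorphic on $\overline{\bC}$ and therefore rational. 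Two points deserve one explicit line each. First, the a.e.\ existence of the radial limits of $h=\tilde\vartheta f-\tilde\phi$ on $\bT$ from outside is what licenses your dominated-convergence computation of the Laurent coefficients; it follows from Fatou's theorem for $\vartheta,\phi\in H^\infty$ together with the continuity of $f$ across $\bT$ (alternatively, Privalov applied to the Nevanlinna-class function $G-f$ on the annulus does the gluing in one stroke, as you note). Second, if $\vartheta$ is an infinite Blaschke product then $\tilde\vartheta$ has infinitely many zeros accumulating at $\bT$ from outside, so you should say why the glued function has only finitely many poles: the would-be poles of $G$ in $1<|z|<R$ are removable because $G=f$ there, and for any $R'$ with $1<R'<R$ only finitely many zeros of $\vartheta$ lie in $|z|\le 1/R'$, so $G$ has finitely many poles in $|z|\ge R'$ (including possibly at $\infty$); meromorphy on the compact sphere then gives rationality. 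The converse direction (a rational function in $H^2$ sits in the finite-dimensional $S^*$-invariant span of polynomials and the partial fractions $(1-\overline{\lambda}z)^{-k}$ with $|\lambda|<1$, hence is non-cyclic) is standard and correctly sketched.
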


\begin{lem}\label{nu}
Let $A\in B(H^2)$ with $r(A)<1$, $f,g\in H^2$ be nonzero vectors.
Then $H_\psi$ is not injective if and only if $\nu=\sum_{n=0}^{\infty}\langle A^n f,g\rangle z^{n}$ is a rational function,
where $\psi=\sum_{n=0}^{\infty}\langle A^n f,g\rangle z^{-n}$.
\end{lem}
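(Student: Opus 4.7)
The plan is to reduce both directions of the equivalence to the question of whether $\nu$ is a cyclic vector of $S^*$, and then invoke Theorem \ref{DSS}.

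First I would verify that $\nu$ extends holomorphically past the unit circle. Since $r(A)<1$, choose $\rho$ with $r(A)<\rho<1$; by the spectral radius formula there exists $C>0$ with $\|A^n\|\le C\rho^n$ for all $n\ge 0$. Writing $c_n=\langle A^nf,g\rangle$, we get $|c_n|\le C\rho^n\|f\|\|g\|$, so $\nu(z)=\sum_{n=0}^\infty c_nz^n$ converges in the disk of radius $1/\rho>1$. Thus $\nu$ is holomorphic in a disk strictly larger than $\overline{\mathbb{D}}$ and lies in $H^\infty\subset H^2$, so Theorem \ref{DSS} applies: $\nu$ is either a cyclic vector of $S^*$ or a rational function.

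Next I would translate the condition $\ker H_\psi\neq\{0\}$ into non-cyclicity of $\nu$ under $S^*$. The matrix of $H_\psi$ with respect to $\{e_n\}_{n=0}^\infty$ computed in the proof of Theorem \ref{C} is the Hankel matrix $(c_{i+j})_{i,j\ge 0}$. Hence $\alpha=\sum_j\alpha_je_j\in\ker H_\psi$ if and only if $\sum_j c_{i+j}\alpha_j=0$ for every $i\ge 0$. Since $(S^*)^i\nu=\sum_{m\ge 0} c_{i+m}e_m$, one has $\langle(S^*)^i\nu,e_j\rangle=c_{i+j}$, so the condition becomes $\langle(S^*)^i\nu,C\alpha\rangle=0$ for all $i\ge 0$, where $C\colon H^2\to H^2$ denotes the antilinear isometric involution that conjugates Fourier coefficients. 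Because $C$ is a bijection that preserves norms, $\ker H_\psi\neq\{0\}$ if and only if there is a nonzero vector in $H^2$ orthogonal to the $S^*$-orbit $\{(S^*)^i\nu:i\ge 0\}$, i.e., if and only if $\nu$ is not a cyclic vector of $S^*$.

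Combining the two steps yields the claim: $H_\psi$ is not injective if and only if $\nu$ is not cyclic for $S^*$, which by Theorem \ref{DSS} is equivalent to $\nu$ being a rational function. The only delicate point is the antilinear conjugation bridging $\ker H_\psi$ and the orthogonal complement of the $S^*$-orbit of $\nu$; once that bookkeeping is handled, the result is immediate from DSS and the explicit Hankel-matrix form of $H_\psi$.
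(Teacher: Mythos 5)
Your proof is correct, and it follows the same skeleton as the paper's: show $\nu$ is holomorphic on a disk of radius $>1$ so that Theorem \ref{DSS} applies, identify the Hankel matrix of $H_\psi$ as $(\langle A^{i+j}f,g\rangle)_{i,j\ge 0}$, reduce everything to whether $\nu$ is cyclic for $S^*$, and conclude via DSS. The one place you diverge is the bridge between $H_\psi$ and the $S^*$-orbit of $\nu$. The paper reads the columns of the Hankel matrix as $\nu, S^*\nu, (S^*)^2\nu,\dots$, so that $\overline{\ran H_\psi}=\bigvee_{n\ge 0}\{(S^*)^n\nu\}$, and then invokes Peller's equivalence (Theorem \ref{injectivity of Hankel operator}) between $\ker H_\psi\neq\{0\}$ and $\overline{\ran H_\psi}\neq H^2$. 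You instead read the rows: $\alpha\in\ker H_\psi$ iff $\sum_j c_{i+j}\alpha_j=0$ for all $i$, i.e.\ iff $C\alpha\perp (S^*)^i\nu$ for all $i$, where $C$ is coefficient conjugation; since $C$ is an antilinear isometric bijection of $H^2$, non-injectivity of $H_\psi$ is directly equivalent to non-cyclicity of $\nu$ for $S^*$. Your route is marginally more self-contained (it does not need the kernel/range equivalence for Hankel operators), at the cost of the antilinear bookkeeping; the paper's route is slightly slicker because the range identification is immediate from the column picture. Both are valid, and the computation $\langle (S^*)^i\nu, C\alpha\rangle=\sum_j c_{i+j}\alpha_j$ checks out.
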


\begin{proof}
Since $r(A)<1$,
\[\underset{n\to \infty}{\overline{\lim}}|\langle A^n f,g\rangle|^{\frac{1}{n}}\leq r(A).\]
 Hence,
\[\nu=\sum_{n=0}^{\infty}\langle A^n f,g\rangle z^{n}\] is holomorphic in $|z|<\frac{1}{r(A)}$.

Observe that with respect to $\{e_n\}_{n=0}^{\infty}$,
\[H_\psi=\begin{bmatrix}
    \langle f,g\rangle &\langle Af,g\rangle&\langle A^2f,g\rangle&\langle A^3f,g\rangle&\cdots\\
     \langle Af,g\rangle &\langle A^2f,g\rangle&\langle A^3f,g\rangle&&\ddots\\
\langle A^2f,g\rangle &\langle A^3f,g\rangle&&\ddots&\\
\langle A^3f,g\rangle &&\ddots&&\\
\vdots&\ddots&&&\\
\end{bmatrix}=[\nu, S^*\nu,(S^*)^2\nu, (S^*)^3\nu,\cdots].\]
Hence,
\[\overline{\ran H_\psi}=\bigvee_{n=0}^\infty \{(S^*)^n\nu\}.\]
That is, $\overline{\ran H_\psi}\neq H^2$ if and only if $\nu$ is not a cyclic vector of $S^*$.

According to Theorem \ref{DSS}, $\nu$ is not a cyclic vector of $S^*$ if and only if $\nu$ is a rational function.
By Theorem \ref{injectivity of Hankel operator}, $H_\psi$ is not injective if and only if $\overline{\ran H_\psi}\neq H^2$.

Therefore, $H_\psi$ is not injective if and only if $\nu$ is a rational function.

\end{proof}

\begin{eg}
Let $\{e_n\}_{n=0}^\infty$ be the standard orthonormal basis of $H^2$, and $A$ be the weighted unilateral shift operator such that $Ae_n=\frac{1}{n+1}e_{n+1}$.
Then it is easy to see that $\sigma(A)=\{0\}$ and $A$ is intransitive.
Let $f=e_0$ and $g=\sum_{n=0}^{\infty}\frac{1}{n+1}e_n$.
Since
\[\nu=\sum_{n=0}^\infty \langle A^nf, g\rangle z^n=\sum_{n=0}^\infty \frac{1}{(n+2)!}z^n\]
 is an entire function. By Lemma \ref{nu}, $H_{\psi}$ is injective, where $\psi=\sum_{n=0}^{\infty}\langle A^n f,g\rangle z^{-n}$.

However, let $f_1=e_1$, $g_1=e_0$, then
\[\psi_1\doteq \sum_{n=0}^{\infty}\langle A^n f_1,g_1\rangle z^{-n}=0,\]
and hence $H_{\psi_1}=0$.
Thus, by Theorem \ref{C}, $A$ is intransitive.

We remark that when one intends to use Theorem \ref{C} to show that $A$ being intransitive, one need choose
$f,g$ carefully!

\end{eg}

The following theorem is a variant of Theorem \ref{C}.

\begin{thm}\label{T:variant of C}
Let $A\in B(H^2)$ with $r(A)<1$. Then $A$ is intransitive if and only if there are nonzero vectors $f,g\in H^2$, such that $\nu=\sum_{n=0}^{\infty}\langle A^n f,g\rangle z^{n}$ is a rational function.
\end{thm}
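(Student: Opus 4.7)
The plan is to derive Theorem \ref{T:variant of C} as a direct corollary by chaining together Theorem \ref{C} and Lemma \ref{nu}, which together already provide both of the pieces that the statement requires. There is essentially no new content to prove: the theorem is a repackaging of the Hankel-operator equivalence in terms of rationality of the power series $\nu$ rather than injectivity of $H_\psi$.

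First I would set up the two relevant series. Given nonzero $f,g\in H^2$, define
\[
\psi \;=\; \sum_{n=0}^{\infty} \langle A^n f, g\rangle z^{-n}, \qquad \nu \;=\; \sum_{n=0}^{\infty} \langle A^n f, g\rangle z^{n}.
\]
Since $r(A)<1$, the estimate $\overline{\lim}_{n\to\infty} |\langle A^n f,g\rangle|^{1/n}\le r(A)<1$ shows that $\nu$ is holomorphic on a disk of radius strictly larger than $1$, so the hypotheses of Lemma \ref{nu} are satisfied, and $\psi\in L^\infty(\mathbb{T})$ so that $H_\psi$ is a well-defined (in fact Hilbert--Schmidt) Hankel operator.

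For the forward direction, assume $A$ is intransitive. By Theorem \ref{C}, there exist nonzero $f,g\in H^2$ such that $H_\psi$ is not injective. Applying Lemma \ref{nu} to this pair $(f,g)$, we conclude that the corresponding $\nu$ is a rational function. For the reverse direction, assume there are nonzero $f,g\in H^2$ with $\nu$ rational. Lemma \ref{nu} then gives that $H_\psi$ is not injective for this same pair $(f,g)$, and Theorem \ref{C} yields that $A$ is intransitive.

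The only subtlety worth noting — and it is purely a bookkeeping matter, not an obstacle — is that Lemma \ref{nu} is an ``if and only if'' statement about a fixed pair $(f,g)$, so the \emph{same} pair that witnesses rationality of $\nu$ witnesses non-injectivity of $H_\psi$ and vice versa. This makes the implication chain close cleanly in both directions without needing to modify the vectors. Hence the theorem follows at once from Theorem \ref{C} and Lemma \ref{nu}.
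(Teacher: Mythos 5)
Your proposal is correct and is exactly the paper's argument: the authors also prove Theorem \ref{T:variant of C} by chaining Theorem \ref{C} with Lemma \ref{nu}, using that Lemma \ref{nu} is an equivalence for the same fixed pair $(f,g)$. Your write-up just spells out the bookkeeping that the paper leaves implicit.
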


\begin{proof}
This follows by combining Theorem \ref{C} with Lemma \ref{nu}.

\end{proof}

Now we are ready to give the proof of Theorem \ref{D}.
\begin{proof}[\textup{\textbf{Proof of Theorem \ref{D}}}]
Suppose toward contradiction that there exists  $0\neq\alpha=\{\alpha_k\}_{k=0}^{\infty}$, $\sum_{k=0}^{\infty}\alpha_kT^kx$ is a
non-cyclic vector of $T$. Let $g(z)=\sum_{k=0}^\infty \alpha_k z^k$. Then $g(z)$ is a nonzero analytic function at zero. By Proposition \ref{two conditions make A intransitive},
$K_x$ is injective as $\sigma_p(T)=\emptyset$.
Hence,
\[g(T)x=K_xg\neq 0.\]
Set
\[\M=\bigvee_{k=0}^\infty\{S^kg\},~~\N=\bigvee_{k=0}^{\infty}\{T^k(g(T)x)\}.\]
Since $g(T)x$ is a non-cyclic vector of $T$, we could choose a unit vector $y\in H^2$ such that $y\perp \N$.
Furthermore, let $f=K_x^*y$. Notice that
\[TK_x=K_xS,\]
 and hence
\[T^k(g(T)x)=T^k(K_xg)=K_xS^kg,~~\forall k\geq 0.\]
Then for $k\geq 0$,
\[\langle (S^*)^kf, g\rangle=\langle f, S^kg\rangle=\langle K_x^*y, S^kg\rangle=\langle y, K_xS^kg\rangle=\langle y,  T^k(g(T)x)\rangle=0,\]
which implies that  $f$ is a non-cyclic vector for $S^*$.

Notice that
\[f=K_x^*y=(\sum_{k=0}^\infty T^kx\otimes e_k)^*y=\sum_{k=0}^\infty \langle y,T^kx\rangle e_k=\sum_{k=0}^\infty \langle y,T^kx\rangle z^k.\]
Since $\sigma(T)=\{0\}$, as done in the proof of Lemma \ref{nu}, it is straightforward to check that $f$ is an entire function. Then by Theorem \ref{DSS}, $f$ being a non-cyclic vector of $S^*$ implies that $f$ is a polynomial, which contradicts with the assumption that
$T^mx$ is a cyclic vector of $T$ for each $m\in \bN$.

\end{proof}

Next we will introduce a straightforward application of Theorem \ref{D}.
Recall that the Volterra operator on $L^2[0,1]$ is defined by
\[Vf(t)=\int_0^tf(s)\textup{d}s, ~t\in[0,1],~\forall f\in L^2[0,1].\]
 $V$ is a distinguished example of quasinilpotent operator, and it has been extensively studied. A classical reference is \cite{GK70}.
It is well known that $\sigma_p(V)=\emptyset$. By the Weierstrass approximation theorem, one could conclude that $V^m(1)$ is a cyclic vector of $V$, $\forall m\in \bN$.
Therefore, by Theorem \ref{D}, for $0\neq\alpha=\{\alpha_k\}_{k=0}^{\infty}\in l^2$,
\[\sum_{k=0}^{\infty}\alpha_k V^k(1)=\sum_{k=0}^{\infty}\alpha_k \frac{t^k}{k!}\]
is a cyclic vector of $T$.

In 1970, Halmos raised ten open problems on
operator theory. A companion problem of the third problem of Halmos is the following \cite{Hal70}.
Let $T\in B(\H)$ such that its square $T^2$
is intransitive, is $T$ intransitive?
Applying Theorem \ref{T:variant of C}, we provide a partial result about this problem.

\begin{prop}
Let $T\in B(\H)$ with $r(T)<1$. Suppose that $\M\in \textup{Lat}T^2$, and $\M\neq \{0,\H\}$.
Suppose that there exist $0\neq x\in \M $, $0\neq y\in \M^\perp$, $|\lambda_i|>1$, $1\leq i\leq d$,
such that
\[\langle T^{2k}x,T^*y\rangle =\sum_{i=1}^d \frac{1}{\lambda_i^k},\]
where $d\in \bN$. Then $T$ is intransitive.
\end{prop}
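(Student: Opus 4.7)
The plan is to reduce to Theorem \ref{T:variant of C} by computing the generating function $\nu(z)=\sum_{n=0}^\infty \langle T^n x,y\rangle z^n$ and showing it is rational. Identify $\H$ with $H^2$ (this is harmless as both are separable infinite-dimensional Hilbert spaces and Theorem \ref{T:variant of C} is a purely operator-theoretic statement once $\H\cong H^2$ is fixed).

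First, I would split the sequence $\langle T^n x,y\rangle$ into its even- and odd-indexed parts. For the even indices, I would use the invariance hypothesis: since $\M\in\textup{Lat}T^2$ and $x\in\M$, one has $T^{2k}x\in\M$, and because $y\in\M^\perp$ we get
\[
\langle T^{2k}x,y\rangle=0,\qquad\forall k\geq 0.
\]
For the odd indices, I would rewrite the given hypothesis as
\[
\langle T^{2k+1}x,y\rangle=\langle T^{2k}x,T^*y\rangle=\sum_{i=1}^d\frac{1}{\lambda_i^k},\qquad\forall k\geq 0.
\]

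Next, I would plug these in and sum the resulting geometric series. Since $|\lambda_i|>1$, the series $\sum_{k\geq 0}(z^2/\lambda_i)^k$ converges for $|z|<1$ (in particular on a neighborhood of $0$), and
\[
\nu(z)=\sum_{k=0}^\infty\langle T^{2k+1}x,y\rangle\,z^{2k+1}=z\sum_{i=1}^d\sum_{k=0}^\infty\frac{z^{2k}}{\lambda_i^k}=z\sum_{i=1}^d\frac{\lambda_i}{\lambda_i-z^2}.
\]
This is manifestly a rational function of $z$. Since $x\neq 0$ and $y\neq 0$, and $r(T)<1$ by hypothesis, Theorem \ref{T:variant of C} applies with $f=x$, $g=y$, yielding the intransitivity of $T$.

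The argument is essentially book-keeping: the only subtle point is realizing that combining the invariance of $\M$ under $T^2$ (which forces the even-indexed coefficients of $\nu$ to vanish) with the prescribed geometric-sum form of the odd-indexed coefficients is exactly what produces a rational $\nu$. I do not anticipate any genuine obstacle; the main step to verify carefully is simply that $\nu$ is the generating function required by Theorem \ref{T:variant of C}, and that the rationality is not spoiled by the odd/even decomposition.
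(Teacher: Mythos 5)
Your proposal is correct and follows exactly the same route as the paper: the even-indexed coefficients vanish by the $T^2$-invariance of $\M$, the odd-indexed ones sum to a geometric series by the hypothesis, and the resulting rational generating function triggers Theorem \ref{T:variant of C}. The only cosmetic difference is that you write the sum as $z\sum_{i=1}^d \lambda_i/(\lambda_i-z^2)$ while the paper writes $z\sum_{i=1}^d (1-z^2/\lambda_i)^{-1}$, which are the same function up to the constant factors $\lambda_i$.
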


\begin{proof}
Since $\M\in \textup{Lat}T^2$, $x\in \M $, $y\in \M^\perp$, then
\[\langle T^{2k}x,y\rangle=0,~~\forall k\geq 0.\]
Then
\begin{align*}
\sum_{k=0}^\infty \langle T^{k}x,y \rangle z^{k}
&=\sum_{k=0}^{\infty}\langle T^{2k}x,y\rangle z^{2k}+\sum_{k=0}^\infty \langle T^{2k+1}x,y\rangle z^{2k+1}\\
&=\sum_{k=0}^\infty \langle T^{2k}x,T^* y \rangle z^{2k+1}\\
&=z\sum_{i=1}^d (\sum_{k=0}^\infty (\frac{z^2}{\lambda_i})^k)\\
&=z(\sum_{i=1}^d \frac{1}{1-\frac{z^2}{\lambda_i}}).
\end{align*}
By Theorem \ref{T:variant of C}, $T$ is intransitive.
\end{proof}


\bibliographystyle{amsplain}

\end{document}